 \newtheorem{thm}{Theorem}[section]
 \newtheorem{lem}[thm]{Lemma}
 \theoremstyle{definition}
 \theoremstyle{remark}
 \newtheorem{rem}[thm]{Remark}
 \numberwithin{equation}{section}
 \newcommand{\F}{\mathcal{F}}
\begin{document}
\setlength{\baselineskip}{16pt}
%-------------------------------------------------------------------------
% editorial commands: to be inserted by the editorial office
%
%\firstpage{1}
%\volume{228}
%\Copyrightyear{2004}
%\DOI{003-0001}
%
%
%\seriesextra{Just an add-on}
%\seriesextraline{This is the Concrete Title of this Book\br H.E. R and S.T.C. W, Eds.}
%
% for journals:
%
%\firstpage{1}
%\issuenumber{1}
%\Volumeandyear{1 (2004)}
%\Copyrightyear{2004}
%\DOI{003-xxxx-y}
%\Signet
%\commby{inhouse}
%\submitted{March 14, 2003}
%\received{March 16, 2000}
%\revised{June 1, 2000}
%\accepted{July 22, 2000}
%
%
%
%---------------------------------------------------------------------------
%Insert here the title, affiliations and abstract:
%
\title[Oscillatory integrals and decay rates for wave-type equations]
 {Estimates for a class of oscillatory integrals and decay rates for wave-type equations}
%----------Author 1
\author{Anton Arnold}

\address{Institut f\"{u}r Analysis und Scientific Computing, \\
Technische Universit\"{a}t Wien\\
Wiedner Hauptstr. 8, A-1040 Wien, Austria;} \email{anton.arnold@tuwien.ac.at}

\thanks{This work was supported by the Postdoctoral Science Foundation of Huazhong University of Science and Technology in China and the Eurasia-Pacific Uninet scholarship for post-docs in Austria.
The first author was supported by the FWF (project I 395-N16). {The third author
was supported by NSFC (No. 10801057), the Key Project of Chinese
Ministry of Education (No. 109117), NCET-10-0431, and  CCNU Project (No. CCNU09A02015)}}

%----------Author 2
\author{JinMyong Kim}

\address[Current Address]{%
: Institut f\"{u}r Analysis und Scientific Computing, \\
Technische Universit\"{a}t Wien\\
Wiedner Hauptstr. 8, A-1040 Wien, Austria;\\[2mm]
(Permanent Address) : Department of Mathematics,\\
Kim Il Sung University\\
Pyongyang,  DPR Korea;} \email{jinjm39@yahoo.com.cn}

%----------Author 3
\author{Xiaohua Yao }
\address{ Department of Mathematics\\
Central China Normal University \\
Wuhan 430079, P. R. China;} \email{yaoxiaohua@mail.ccnu.edu.cn} % yaoxiaohua_ccnu@yahoo.cn

%----------classification, keywords, date
\subjclass{ 42B20; 42B37; 35L25; 35B65}

\keywords{Oscillatory integral, higher-order wave equation,
fundamental solution estimate}

\date{January 1, 2011}
%----------additions
\dedicatory{}
%%% ----------------------------------------------------------------------

\begin{abstract}
This paper investigates higher order wave-type equations of the form
$\partial_{tt}u+P(D_{x})u=0$, where the symbol $P(\xi)$ is a real,
non-degenerate elliptic polynomial of the order $m\ge4$ on ${\bf R}^n$.
Using  methods from harmonic analysis, 
we first establish global pointwise time-space
estimates for a class of oscillatory integrals that appear as the fundamental
solutions to the Cauchy problem of such wave equations. 
These estimates are then used 
to establish (pointwise-in-time) $L^p-L^q$ estimates on the wave solution in terms of the 
initial conditions.
\end{abstract}

%%% ----------------------------------------------------------------------
\maketitle
%%% ----------------------------------------------------------------------
%\tableofcontents
\section{Introduction}\label{S1}

It is well known that the solution $u(t,x)$ of the Cauchy problem for the wave
equation:
$$
\left\{%}
\begin{array}{ll}
 \partial_{tt}u(t,x)-\triangle u(t,x)=0,\quad (t,x)\in {\bf R}\times {\bf R}^n\\
u(0,x)=u_0(x),\partial_tu(0,x)=u_1(x),\quad x\in {\bf R}^n
\end{array}
\right.
$$
has the following form:
\begin{equation} \label{eq:11}
u(t,x)={\F}^{-1}\cos(|\xi|t)\,{\F}u_0+{\F}^{-1}\frac{\sin(
|\xi|t)}{|\xi|}\,{\F}u_1,
\end{equation}
where ${\F}$ (resp. ${\F}^{-1}$) denotes the Fourier transform (resp.
its inverse). On the other hand,
\begin{equation} \label{eq:12}
u(t,x)={\F}^{-1}\cos\left([1+|\xi|^2]^{1/2}t\right)\,{\F}u_0
+{\F}^{-1}\frac{\sin\left( [1+|\xi|^2]^{1/2}t \right)}{(1+|\xi|^2)^{1/2}}\,{\F}u_1
\end{equation}
is the solution of the linear Klein-Gordon equation:
$$
\left\{%}
\begin{array}{ll}
    \partial_{tt}u(t,x)-\triangle u(t,x)+u(t,x)=0,\quad (t,x)\in {\bf R}\times {\bf R}^n  \\
    u(0,x)=u_0(x),\;\partial_tu(0,x)=u_1(x), \quad x\in {\bf R}^n.\\
\end{array}
\right.
$$
  If we use $P(\xi)=|\xi|^2$ and
$P=1+|\xi|^2$, respectively in \eqref{eq:11} and \eqref{eq:12}, then the above solutions read as follows:
\begin{equation} \label{eq:13}
 \begin{array}{c}
   \displaystyle{u(t,x)={\F}^{-1}\cos \left(P^{1/2}(\xi)t\right)\,{\F}u_0+{\F}^{-1} \frac {\sin
   \left(P^{1/2}(\xi)t\right)}{P^{1/2}(\xi)}\,{\F}u_1}
   \\[4mm]
   \!\!\!\! \displaystyle{=\Big({\F}^{-1} \frac{e^{iP^{1/2}(\xi)t}+e^{-iP^{1/2}(\xi)t}}{2}\Big)\ast
   u_0+\Big({\F}^{-1} \frac
   {e^{iP^{1/2}(\xi)t}-e^{-iP^{1/2}(\xi)t}}{2iP^{1/2}(\xi)}\Big)\ast u_1.}
 \end{array}
\end{equation}
The main focus of this paper is to derive pointwise
estimates (both in $t$ and $x$) on the oscillatory integrals
\begin{equation} \label{eq:14}
I_1(t,x):=\int_{{\bf R}^n} e^{i<x,\xi>\pm itP^{1/2}(\xi)}d\xi
\end{equation}
and
\begin{equation} \label{eq:15}
I_2(t,x):=\int_{{\bf R}^n} e^{i<x,\xi>\pm itP^{1/2}(\xi)}P^{-1/2}(\xi)d\xi
\end{equation}
appearing in \eqref{eq:13} -- but for a larger class of symbols $P(\xi)$.
{}From such estimates on the fundamental solution one can then derive 
solution properties, like its spatial decay at a fixed time, or decay/growth estimates of 
$\|u(t,.)\|_{L^q}$ in time.
%Such estimates for the wave and the Klein-Gordon equations are well known in the literature \cite{Mia,Miy,P}. The objective of this paper is to derive similar estimates for wave-type equations of (even) order $m\ge4$.

For the classical wave and the Klein-Gordon equations, such pointwise-in-time $L^p$--$L^q$ decay estimates (i.e.\ estimates on $\|u(t,.)\|_{L^q}$ in terms of $\|u_0\|_{L^p}$ and $\|u_1\|_{L^p}$) can be found frequently in the literature \cite{Br, Mo, MSW, Mia, Miy, Str}.  It is also well known that such $L^p$--$L^{p'}$ estimates allow to deduce the famous Strichartz inequalities, which are very useful for the analysis of nonlinear wave equations (see e.g.\ \cite{GV, KT, Stra, So1, Ta}). More generally, many similar Strichartz-type estimates  (local and global in time, or with certain spatial weights) for second order hyperbolic equations have been established in the case of variable coefficients or on Riemannian manifolds. There, crucial analytic tools from microlocal analysis or spectral theory are employed (see e.g.\ \cite{Be, BTz,Bu, Ka, MSS, SS1, SS2, Tat} and the references therein). We remark that these mentioned Strichartz-type estimates are for space-time-integrals, while our estimates are all pointwise in time.

In this paper, our main aim is to derive $L^p$--$L^q$ estimates for the following general wave-type equations:
$$
\left\{%}
\begin{array}{ll}
 \partial_{tt}u(t,x)+P(-i\nabla)u(t,x)=0,\quad (t,x)\in {\bf R}\times {\bf R}^n\\
u(0,x)=u_0(x),\partial_tu(0,x)=u_1(x),\quad x\in {\bf R}^n\,,
\end{array}
\right.
$$
where $P(\xi)$ is a positive, real valued polynomial of higher (even) order $m\ge4$ on ${\bf R}^n$.
In order to derive $L^p$--$L^q$ estimates of the solution \eqref{eq:13}, it suffices to study pointwise estimates of  the oscillatory integrals \eqref{eq:14} and \eqref{eq:15} associated to the general polynomial $P$. To this end, we need the following assumptions on $P(\xi)$:

%In the sequel, $P$ in \eqref{eq:14}, \eqref{eq:15} will denote more general real polynomials on ${\bf R}^n$.
%In order to get decay estimates on the integral in \eqref{eq:14} and \eqref{eq:15}, we need
%the following assumptions on the polynomial $P(\xi)$:
\vskip0.3cm
\textbf{(H1)}: $P:{\bf R}^n\rightarrow {\bf R}$ is a real elliptic inhomogeneous
polynomial of even order $m\geq 4$ with $P(\xi)>0$ for all $\xi\in{\bf R}^n$,
and $n\ge2$.

\textbf{(H2)}: $P$ is  non-degenerate, i.e. the determinant of the
Hessian
\begin{equation} \label{eq:16}
{\rm
det}\Big{(}\frac{\partial^{2}P_m(\xi)}{\partial\xi_{i}\partial\xi_{j}}\Big{)}_{n\times
n}\neq0 \quad \forall \ \xi\in {\bf R}^n\backslash\{0\}\,,
\end{equation}
where $P_m$ is the principal part of $P$. \vskip0.3cm It
is well known that for elliptic polynomials $P$, condition
\textbf{(H2)} is equivalent to the following condition
\textbf{(H2$'$)} (see Lemma 2 in \cite{C2}). \vskip0.4cm
 \textbf{(H2$'$)}: For any fixed $z\in {\bf S}^{n-1}$(the unit sphere of ${\bf R}^n$), the
function $\psi(\omega):=\,<z,\omega>(P_m(\omega))^{-1/m}$, defined on ${\bf S}^{n-1}$,
 is non-degenerate at its critical points. This means: If $ d_{\omega}\psi$, the differential of $\psi$ at a point $\omega\in {\bf S}^{n-1}$
vanishes, then $ d_{\omega}^{2}\psi$, the second order differential of $\psi$ at this point
is non-degenerate.
Note that the non-degeneracy of  $P$ is also equivalent to
det$(\partial_i\partial_j P(\xi))_{n\times n}$ being an elliptic
polynomial of order $n(m-2)$.

Particular examples of such higher order wave-type equations have already been studied in
several papers. For $P=1+|\xi|^{4}$ (linear beam equations of forth order), Levandosky \cite{Le} obtained  $L^p$--$L^q$ estimates and space-time integrability estimates. He used them to study the local existence and the asymptotic behavior of solutions to the nonlinear equation with nonlinear terms growing like a certain power of $u$. Further, Levandosky and Strauss \cite{LS}, Pausader \cite{Pa, Pa1} established the scattering theory of the nonlinear beam equation with subcritical nonlinear terms for energy initial values. Even earlier, for $P=1+|\xi|^{^m}$ (with $m\geq4$ even), Pecher \cite{P} studied $L^p$--$L^{p'}$ estimates of such higher order wave equations and also considered their application to nonlinear problems. Clearly, these polynomials are special cases satisfying our assumptions stated above. In the sequel, we shall deal with the general class in the form of the oscillatory integrals \eqref{eq:14} and \eqref{eq:15} under the assumptions \textbf{(H1)} and \textbf{(H2)}. Comparing with the classical wave equation and Klein-Gorden equations,  the fundamental solutions of higher order wave-type equations behave ``better'' in the dispersions relation and w.r.t.\ the gain of a certain decay in the space variable $x$.  As a consequence, we can obtain a larger set of admissible $(1/p,1/q)$--pairs such that the $L^p$--$L^q$ estimates hold (see \S \ref{S4}).  Concerning dispersive estimates, our methods (mainly from harmonic analysis) and results are similar to those of various dispersive Schr\"odinger-type  equations. And on this topic there exists a vast body of literature, see e.g.\ \cite{BE, BKS, BKS2,BS,BT, CMY, C1,C2, YY1,YY2, KAY, KPV, Mia,YZ,ZYF}.

%For the symbol
%$P=1+|\xi|^{2}$, Miao \cite{Mia} considered the
%oscillatory integral \eqref{eq:15} and the corresponding $L^p-L^q$ estimates
%for the solution to the Klein-Gordon equation. And for the case
%$P=1+|\xi|^{m}$ (with $m\geq4$ even) Pecher \cite{P} studied the $L^p-L^{p'}$ estimates of such higher order wave
%equations. For various dispersive equations (in particular of Schr\"odinger type) the same problems were also studied
%extensively (see \cite{BE,BKS1,BKS2,BS,BT,C1,C2,Mia,YY1,YY2,YZ,ZYF,KAY}).

\vskip0.3cm The oscillatory integrals \eqref{eq:14} and
 \eqref{eq:15} can initially be understood in the distributional
sense. Based on the assumption that $P$ is elliptic, it is easy
to see that $I_j(t,x);\,j=1,2$ are infinitely differentiable
functions in the $x$ variable for every fixed $t\neq0$ (e.g.  see \S1 of \cite{C2}).
In this paper, we shall derive pointwise time-space estimates for the oscillatory
integrals \eqref{eq:14} and \eqref{eq:15}. Subsequently, such estimates are used to establish
$L^p-L^q$ estimates for the wave solutions.  Finally, we also remark that, based on these $L^p-L^q$ estimates,
some applications to nonlinear problems can be expected, which will be investigated in a following paper.

%The goal of this paper is to investigate higher order wave-type equations with general inhomogeneous symbols $P$. We shall derive pointwise time-space estimates for the oscillatory
%integrals \eqref{eq:14} and \eqref{eq:15}. Subsequently, such estimates are used to establish $L^p-L^q$ estimates for the wave solutions.

This paper is organized as follows. In Section \ref{S2}, we make some
pretreatment to the oscillatory integrals \eqref{eq:14} and \eqref{eq:15}, review
the (polar coordinate transformation) method of Balabane et al.\ \cite{BE} and its extension by Cui \cite{C2}.  %and we present some necessary lemmata.
In the core Section \ref{S3} we prove the pointwise
time-space estimates on \eqref{eq:14}, \eqref{eq:15}, following the strategy from \cite{KAY}. Finally, in \S 4 these estimates are applied to obtain $L^p-L^q$
estimates for solutions to higher order wave equations.

%%%%%%%%%%%%%%%%%%%%%%%%%%%%%%%%%%%%%%%%%%%%%%%%%%%%%%%%%%%%%%%%%%%%%%%%%%%%%%%%%%%%%%%%%%%%%%%
\section{Preliminaries}\label{S2}

We denote by ${\bf S}^{n-1}$ the unit sphere in ${\bf R}^n$, and by
$(\rho,\omega)\in[0,\infty) \times{\bf S}^{n-1}$ the polar
coordinates in ${\bf R}^n$.
Throughout this paper, we assume that  $P:{\bf R}^n\rightarrow {\bf R}$
satisfies the assumptions (${\bf H1}$) and (${\bf H2}$) (or
(${\bf H2'}$)).
Hence, $P_m(\xi)>0$ for $\xi\ne0$. This implies that there exists a large enough constant
$a>0$ with: For each fixed $s\ge a$ and each fixed $\omega\in{\bf
S}^{n-1}$, the equation $P(\rho\omega)=s$  has a unique positive solution $\rho=\rho(s,\omega)\in
C^\infty([a,\infty)\times{\bf S}^{n-1})$. By Lemma 2 in \cite{BE}, $\rho$ can be decomposed as
\begin{equation} \label{eq:21}
\rho(s,\omega)=s^\frac{1}{m}(P_{m}(\omega))^{-\frac{1}{m}}
+\sigma(s,\omega),
\end{equation}
where $\sigma\in S^0_{1,0}([a,\infty)\times{\bf S}^{n-1})$. This symbol class denotes functions in
$C^\infty ([a,\infty)\times{\bf S}^{n-1})$ that satisfy the following condition (cf.  \cite{C2,St}):
For every $k\in{\bf N}_0$ and every differential operator
$L_\omega$ on the sphere ${\bf S}^{n-1}$, there exists a constant $C_{kL}$ such
that
\begin{equation} \label{eq:22}
|\partial^k_s L_\omega\sigma(s,\omega)|\le C_{kL}(1+s)^{-k}
\quad{\rm for}\ s\ge a\ {\rm and}\ \omega\in{\bf S}^{n-1}.
\end{equation}
\vskip0.3cm We now recall two lemmata (see \cite{BE,C2}) for the
following phase function
$$
\phi(s,\omega):=s^{-\frac{1}{m}}\rho(s,\omega)\langle z,\omega
\rangle\quad{\rm for}\ s\ge a\ {\rm and}\ \omega\in{\bf S}^{n-1},
$$
with some fixed $z\in{\bf S}^{n-1}$.  Clearly, $\phi\in
S^0_{1,0}([a,\infty)\times{\bf S}^{n-1})$. For every fixed $z_0\in{\bf
S}^{n-1}$ there exists a (sufficiently small) neighborhood
$U_{z_0}\subset{\bf S}^{n-1}$ of $z_0$ such that the following
lemmata hold uniformly in $z\in U_{z_0}$ (i.e.\ the constants in Lemma \ref{lem:21}, Lemma \ref{lem:22},  and Lemma \ref{lem:23} are then independent of $z$). Therefore we do not write the
variable $z$ in the function $\phi$.

\begin{lem}[Lemma 4 of \cite{C2}, Lemma 3 of \cite{BE}] \label{lem:21}
There exists a constant $a_0\ge a$ and an open cover
$\{\Omega_0,\Omega_{\mbox{\tiny +}},\Omega_{\mbox{-}}\}$ of ${\bf
S}^{n-1}$ with $\Omega_{\mbox{\tiny
+}}\cap\Omega_{\mbox{-}}=\emptyset$ such that it holds for $s\ge a_0$:

(a) The function $\Omega_0\ni\omega\mapsto\phi(s,\omega)$ has no
critical points, and
\begin{equation} \label{eq:23}
\|d_{\omega}\phi(s,\omega)\|\ge c>0\quad{\rm for}\
\omega\in\Omega_0,
\end{equation}
where the constant $c$ is independent of $s$.

(b) Each of the two functions $\Omega_{\mbox{\tiny
$\pm$}}\ni\omega\mapsto\phi(s,\omega)$ has a unique critical point, which satisfies:
$\omega_{\mbox{\tiny $\pm$}}=\omega_{\mbox{\tiny $\pm$}}(s)\in C^\infty([a_0,\infty);
\Omega'_{\mbox{\tiny $\pm$}})$ for some open subset
$\Omega'_{\mbox{\tiny $\pm$}}$ with $\overline{\Omega'}_{\mbox{\tiny
$\pm$}}\subset\Omega_{\mbox{\tiny $\pm$}}$, respectively. Furthermore,
\begin{equation} \label{eq:24}
\|(d^2_{\omega}\phi(s,\omega))^{-1}\|\le c_0 \quad{\rm for}\
\omega\in\Omega_{\mbox{\tiny $\pm$}},
\end{equation}
where the constant $c_0$ is independent of $s$. Moreover,
$\lim_{s\to\infty}\omega_{\mbox{\tiny $\pm$}}(s)$ exists and
$$
|\omega_{\mbox{\tiny $\pm$}}^{(k)}(s)|\le c_k(1+s)^{-k-\frac{1}{m}}
\quad{\rm for}\ k\in{\bf N}.
$$
\end{lem}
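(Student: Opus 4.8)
The plan is to treat $\phi(s,\cdot)$, for $s$ large, as a small $C^\infty$-perturbation of the $s$-independent function $\psi$ of \textbf{(H2$'$)}. Inserting the decomposition \eqref{eq:21} into the definition of $\phi$ gives
\[
  \phi(s,\omega)=\langle z,\omega\rangle\,(P_m(\omega))^{-1/m}+s^{-1/m}\langle z,\omega\rangle\,\sigma(s,\omega)=:\psi(\omega)+R(s,\omega),
\]
and since $\sigma\in S^0_{1,0}$, the estimate \eqref{eq:22}, the Leibniz rule and $|\partial_s^j s^{-1/m}|\le C_j s^{-1/m-j}$ yield, for every $k\in{\bf N}_0$ and every differential operator $L_\omega$ on ${\bf S}^{n-1}$,
\[
  |\partial_s^k L_\omega R(s,\omega)|\le C_{kL}\,(1+s)^{-1/m-k}\qquad(s\ge a),
\]
uniformly for $z$ in a fixed neighbourhood of $z_0$. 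In particular $R(s,\cdot)\to0$ in $C^\infty({\bf S}^{n-1})$ as $s\to\infty$, with $\|\partial_s^k d_\omega R(s,\cdot)\|_{C^1}=O((1+s)^{-1/m-k})$.

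\textbf{Step 2 (the two critical points of $\psi$).} By \textbf{(H2$'$)}, $\psi$ is a Morse function on ${\bf S}^{n-1}$, hence has finitely many, non-degenerate, critical points; I claim there are exactly two. Identify ${\bf S}^{n-1}$ with the level set $\Sigma:=\{\xi\in{\bf R}^n:(P_m(\xi))^{1/m}=1\}$ via the diffeomorphism $\omega\mapsto\omega/(P_m(\omega))^{1/m}$; under it $\psi$ becomes the restriction to $\Sigma$ of the linear functional $\xi\mapsto\langle z,\xi\rangle$, whose critical points are the points of $\Sigma$ with normal parallel to $z$, non-degenerate iff the second fundamental form of $\Sigma$ is nonsingular there. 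The (Gauss--Kronecker) curvature of $\Sigma$ equals, up to a non-vanishing factor, $\det(\partial_i\partial_jP_m)$ restricted to $\Sigma$, which is nowhere zero by \eqref{eq:16}; being definite at the point of $\Sigma$ farthest from the origin and $\Sigma$ being connected, it is definite everywhere, so $\Sigma$ bounds a strictly convex body and its Gauss map is a diffeomorphism onto ${\bf S}^{n-1}$. Hence $\psi$ has exactly two critical points, the maximizer $z_+=z_+(z)$ and minimizer $z_-=z_-(z)$ of $\langle z,\cdot\rangle$ over $\Sigma$, both non-degenerate, depending smoothly on $z$. I then choose disjoint coordinate balls $\Omega'_\pm\ni z_\pm$ and slightly larger disjoint $\Omega_\pm\supset\overline{\Omega'_\pm}$, small enough that on $\Omega_\pm$ the Hessian $d^2_\omega\psi$ stays uniformly close to its invertible value $d^2_\omega\psi(z_\pm)$ --- so $d_\omega\psi$ is a diffeomorphism of $\Omega_\pm$ onto a neighbourhood of $0$, with $z_\pm$ its only zero in $\Omega_\pm$; after shrinking the neighbourhood of $z_0$ if necessary, this is uniform in $z$. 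Put $\Omega_0:={\bf S}^{n-1}\setminus(\overline{\Omega'_+}\cup\overline{\Omega'_-})$; then $\{\Omega_0,\Omega_+,\Omega_-\}$ is an open cover with $\Omega_+\cap\Omega_-=\emptyset$, and on the compact, critical-point-free set $\Omega_0$ one has $\|d_\omega\psi\|\ge 2c>0$.

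\textbf{Step 3 (perturbation, regularity, decay).} Fix $a_0\ge a$ so large that $\|R(s,\cdot)\|_{C^2}$ is, for all $s\ge a_0$, small enough for the estimates below. On $\Omega_0$: $\|d_\omega\phi(s,\cdot)\|\ge\|d_\omega\psi\|-\|d_\omega R(s,\cdot)\|\ge c$, which gives \eqref{eq:23} and the absence of critical points. On $\Omega_\pm$: $d^2_\omega\phi=d^2_\omega\psi+d^2_\omega R$ stays uniformly close to the invertible matrix $d^2_\omega\psi(z_\pm)$, hence is invertible with $\|(d^2_\omega\phi)^{-1}\|\le c_0$, which is \eqref{eq:24}; by convexity of $\Omega_\pm$ this forces $d_\omega\phi(s,\cdot)$ to be injective on $\Omega_\pm$, and, being a $C^0$-small perturbation of the diffeomorphism $d_\omega\psi$, it still attains the value $0$, so it has exactly one zero $\omega_\pm(s)\in\Omega'_\pm$. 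Since $\phi\in C^\infty([a,\infty)\times{\bf S}^{n-1})$ (because $\rho$ is), the implicit function theorem applied to $d_\omega\phi(s,\omega)=0$, whose $\omega$-derivative $d^2_\omega\phi$ is invertible, gives $\omega_\pm\in C^\infty([a_0,\infty);\Omega'_\pm)$. Differentiating $d_\omega\phi(s,\omega_\pm(s))\equiv0$ in $s$ gives $\omega_\pm'(s)=-(d^2_\omega\phi)^{-1}\,\partial_s d_\omega\phi$; since the $\psi$-part is $s$-independent, $\partial_s d_\omega\phi=\partial_s d_\omega R=O(s^{-1-1/m})$ by Step 1, while $\|(d^2_\omega\phi)^{-1}\|=O(1)$, so $|\omega_\pm'(s)|\le c_1(1+s)^{-1-1/m}$. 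The bounds $|\omega_\pm^{(k)}(s)|\le c_k(1+s)^{-k-1/m}$ then follow by induction: differentiating the identity $k$ times and solving for $\omega_\pm^{(k)}$, its coefficient is the invertible $d^2_\omega\phi$, while each remaining monomial is a product of factors $\partial_s^j d^\ell_\omega\phi$ (of size $O(1)$ for $j=0$ and $O(s^{-j-1/m})$ for $j\ge1$, the latter because only $R$ contributes) and factors $\omega_\pm^{(i)}$, $1\le i<k$ (of size $O(s^{-i-1/m})$ by the induction hypothesis); since the total number of $s$-differentiations in each monomial is $k$, a weight count gives the exponent $-k-1/m$. Finally, as $1+1/m>1$, $\omega_\pm'\in L^1$ near $+\infty$, so $\lim_{s\to\infty}\omega_\pm(s)$ exists (it equals $z_\pm(z)$).

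\textbf{Main obstacle.} The conceptual heart is Step 2 --- extracting from \textbf{(H2)} that $\psi$ is Morse with exactly two critical points and no saddles (via the strict convexity of $\Sigma$ and the Gauss map), which is precisely what permits the three-set cover $\{\Omega_0,\Omega_+,\Omega_-\}$. Everything afterwards is a quantitative-perturbation argument, whose only mildly technical routine point is the inductive bookkeeping for the higher $s$-derivatives of $\omega_\pm$ in Step 3.
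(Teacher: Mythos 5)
The paper does not prove this lemma; it is quoted verbatim from Lemma 4 of \cite{C2} (and Lemma 3 of \cite{BE}), so there is no in-paper argument to compare against. Your reconstruction is correct and follows the same route as those references: write $\phi=\psi+R$ with $R=O(s^{-1/m})$ in $C^\infty$, use \textbf{(H2)} to show that the level surface $\Sigma=\{P_m^{1/m}=1\}$ is an ovaloid so that $\psi$ has exactly two non-degenerate critical points, and then run a uniform perturbation/implicit-function argument for large $s$; the inductive weight count for $|\omega_{\pm}^{(k)}|$ is also right. The only step that leans on an unproved standard fact is the identification of the Gauss--Kronecker curvature of $\Sigma$ with $\det(\partial_i\partial_j P_m)$ up to a non-vanishing factor (equivalently, that \textbf{(H2)} forces strict convexity of $\Sigma$); this is true but deserves either the short Euler-identity computation or a citation --- the paper itself covers this point in the proof of Lemma \ref{lem:53} by invoking Proposition 4.2 of \cite{CMY}.
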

\vskip0.3cm
\begin{lem}[Lemma 6 of \cite{C2}] \label{lem:22}
We define $\phi_{\mbox{\tiny $\pm$}}(t,r,s):=st+rs^\frac{2}{
m}\phi (s^2,\omega_{\mbox{\tiny $\pm$}}(s^2))$ for $t$, $r>0$, and
$s\ge a$. Then, there exist constants $a_1\ge \max(a_0,\sqrt a)$ and $c_2>c_1>0$ such
that we have for $s\ge a_1$, $t>0$, and $r>0$:
\begin{equation} \label{eq:25}
c_1\le\pm\phi(s,\omega_{\mbox{\tiny $\pm$}}(s))\le c_2,
\end{equation}
\begin{equation} \label{eq:26}
\partial_s\phi_{\mbox{\tiny +}}(t,r,s)\ge t+c_1rs^{\frac{2}{m}-1},
\end{equation}
\begin{equation} \label{eq:27}
t-c_2rs^{\frac{2}{m}-1}\le\partial_s\phi_{\mbox{-}}(t,r,s)\le
t-c_1rs^{\frac{2}{m}-1},
\end{equation}
\begin{equation} \label{eq:28}
c_1rs^{\frac{2}{m}-2}\le|\partial^2_s\phi_{\mbox{-}}(t,r,s)|\le
c_2rs^{\frac{2}{m}-2},
\end{equation}
and
\begin{equation} \label{eq:29}
|\partial^k_s\phi_{\mbox{\tiny $\pm$}}(t,r,s)|\le c_2rs^{\frac{2}{
m}-k}\quad{\rm for}\ k=2,3,\cdots.
\end{equation}
\end{lem}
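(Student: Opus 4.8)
The plan is to prove Lemma \ref{lem:22} by substituting the decomposition \eqref{eq:21} for $\rho$ into the definition of $\phi_{\mbox{\tiny $\pm$}}$ and then differentiating in $s$, carefully tracking which terms are ``leading order'' and which fall into the symbol class $S^0_{1,0}$. Writing $\phi(s,\omega)=s^{-1/m}\rho(s,\omega)\langle z,\omega\rangle$ with $\rho(s,\omega)=s^{1/m}P_m(\omega)^{-1/m}+\sigma(s,\omega)$, we get $\phi(s,\omega)=\langle z,\omega\rangle P_m(\omega)^{-1/m}+s^{-1/m}\sigma(s,\omega)\langle z,\omega\rangle=\psi(\omega)+s^{-1/m}\tilde\sigma(s,\omega)$, where $\psi$ is exactly the function from \textbf{(H2$'$)} and $\tilde\sigma\in S^0_{1,0}$. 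Evaluating at the critical point $\omega_{\mbox{\tiny $\pm$}}(s)$ and using Lemma \ref{lem:21} — in particular that $\omega_{\mbox{\tiny $\pm$}}(s)$ converges and has derivatives decaying like $(1+s)^{-k-1/m}$ — one sees that $\phi(s,\omega_{\mbox{\tiny $\pm$}}(s))$ converges as $s\to\infty$ to $\psi$ evaluated at a genuine critical point of $\psi$ on $\Omega_{\mbox{\tiny $\pm$}}$; since at such a point $\psi\ne0$ (because $\langle z,\omega\rangle\ne0$ when $d_\omega\psi=0$, as $d_\omega(\langle z,\cdot\rangle P_m^{-1/m})=0$ forces $\langle z,\omega\rangle$ away from $0$ on the relevant chart after shrinking $U_{z_0}$), the bound \eqref{eq:25} with suitable $0<c_1<c_2$ follows on a large-$s$ half-line $s\ge a_1$, the sign being fixed by which of $\Omega_{\mbox{\tiny $\pm$}}$ we are on.

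Next I would compute $\partial_s\phi_{\mbox{\tiny $\pm$}}(t,r,s)$. Since $\phi_{\mbox{\tiny $\pm$}}(t,r,s)=st+rs^{2/m}\phi(s^2,\omega_{\mbox{\tiny $\pm$}}(s^2))$, differentiation gives $\partial_s\phi_{\mbox{\tiny $\pm$}}=t+\tfrac{2}{m}rs^{2/m-1}\phi(s^2,\omega_{\mbox{\tiny $\pm$}}(s^2))+rs^{2/m}\cdot\tfrac{d}{ds}\big[\phi(s^2,\omega_{\mbox{\tiny $\pm$}}(s^2))\big]$. The middle term is the main one: by \eqref{eq:25} it equals $\tfrac{2}{m}rs^{2/m-1}$ times a quantity that is $\ge c_1$ (respectively $\le -c_1$, $\ge -c_2$) on $\Omega_{\mbox{\tiny $\pm$}}$. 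For the last term, the chain rule produces $2s\,\partial_s\phi(s^2,\omega_{\mbox{\tiny $\pm$}})+d_\omega\phi(s^2,\omega_{\mbox{\tiny $\pm$}})\cdot\omega_{\mbox{\tiny $\pm$}}'(s^2)\cdot 2s$; the second piece vanishes because $\omega_{\mbox{\tiny $\pm$}}(s^2)$ is a critical point of $\phi(s^2,\cdot)$, and the first is controlled via the symbol estimate \eqref{eq:22}/\eqref{eq:23} on $\sigma$ — namely $\partial_s\phi = O(s^{-1}\cdot s^{-1/m})$ type bound after accounting for the $s^{-1/m}$ prefactor and the $(1+s)^{-1}$ decay of $\partial_s\tilde\sigma$ — so that $rs^{2/m}\cdot 2s\cdot\partial_s\phi(s^2,\omega_{\mbox{\tiny $\pm$}})=O(rs^{2/m-2})\cdot o(1)$, which is lower order than $rs^{2/m-1}$ for $s$ large. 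Absorbing all lower-order contributions for $s\ge a_1$ (enlarging $a_1$ as needed, and shrinking/enlarging the constants) yields \eqref{eq:26} and \eqref{eq:27}.

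For \eqref{eq:28} and \eqref{eq:29} the same scheme applies to higher $s$-derivatives. Differentiating $\phi_{\mbox{\tiny $\pm$}}$ twice kills the linear-in-$t$ term, so $\partial_s^2\phi_{\mbox{\tiny $\pm$}}=r\,\partial_s^2\big[s^{2/m}\phi(s^2,\omega_{\mbox{\tiny $\pm$}}(s^2))\big]$, and the dominant contribution comes from differentiating the prefactor $s^{2/m}$ twice against the bounded-below-in-modulus factor $\phi(s^2,\omega_{\mbox{\tiny $\pm$}}(s^2))$, giving the order $rs^{2/m-2}$; all cross terms and the terms hitting $\phi$ itself (or $\omega_{\mbox{\tiny $\pm$}}$) are, by the symbol bounds \eqref{eq:22}, the critical-point cancellation, and the decay of $\omega_{\mbox{\tiny $\pm$}}^{(k)}$, of strictly smaller order in $s$, hence negligible for $s\ge a_1$. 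For $\Omega_{\mbox{-}}$ the factor $\phi(s^2,\omega_{\mbox{-}}(s^2))$ is bounded away from zero with a definite sign (by \eqref{eq:25}), which gives the two-sided bound \eqref{eq:28}; for general $k\ge2$ only the upper bound \eqref{eq:29} is needed and follows by the same term-by-term estimate, each $s$-derivative either lowering the power of $s$ by one (on the prefactor) or producing a gain of $(1+s^2)^{-1}\sim s^{-2}$ from the symbol/critical-point structure, and in all cases the resulting power never exceeds $s^{2/m-k}$. The main obstacle is bookkeeping: one must verify that after the chain rule every term involving derivatives of $\omega_{\mbox{\tiny $\pm$}}$ or of $\sigma$ is genuinely lower order — this rests essentially on the critical-point identity $d_\omega\phi(s^2,\omega_{\mbox{\tiny $\pm$}}(s^2))=0$ (eliminating the worst terms) together with the quantitative decay rates from Lemma \ref{lem:21} — and that the threshold $a_1$ and constants $c_1,c_2$ can be chosen uniformly in $z\in U_{z_0}$, which is inherited from the corresponding uniformity already built into Lemma \ref{lem:21}.
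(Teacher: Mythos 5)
The paper does not prove this lemma at all: it is quoted verbatim as Lemma~6 of \cite{C2} (building on \cite{BE}), so there is no in-paper proof to compare against. Your reconstruction follows exactly the standard route of that source -- substitute the decomposition \eqref{eq:21} into $\phi$, so that $\phi(s,\omega)=\psi(\omega)+s^{-1/m}\sigma(s,\omega)\langle z,\omega\rangle$, evaluate at the critical points $\omega_{\mbox{\tiny $\pm$}}(s)$, and use the criticality identity $d_\omega\phi(s^2,\omega_{\mbox{\tiny $\pm$}}(s^2))=0$ plus the symbol bounds \eqref{eq:22} and the decay of $\omega_{\mbox{\tiny $\pm$}}^{(k)}$ to show all correction terms are absorbed for $s\ge a_1$ large -- and it is essentially sound. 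Two cosmetic remarks: the correction term in $\partial_s\phi_{\mbox{\tiny $\pm$}}$ is $O(rs^{-1})$ rather than $O(rs^{2/m-2})\cdot o(1)$ (still lower order than $rs^{2/m-1}$ since $2/m-1>-1$, so the conclusion stands), and the definite signs in \eqref{eq:25} come from the labelling of $\Omega_{\mbox{\tiny $\pm$}}$ as the neighborhoods of the maximizer/minimizer of $\psi$ in Lemma \ref{lem:21}, which is worth stating explicitly rather than leaving implicit.
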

\vskip0.4cm With this preparation we are able to estimate the following oscillatory integral
\begin{equation} \label{eq:29a}
\Phi(\lambda,s):=\int_{{\bf S}^{n-1}}e^{i\lambda\phi(s,\omega)}
b(s,\omega)d\omega,
\end{equation}
where $b(s,\omega):=s^{1-\frac{n}{m}}\rho^{n-1}\partial_s\rho\in
S^0_{1,0} ([a,\infty)\times{\bf S}^{n-1})$ and $\lambda>0$. Let
$\varphi_{\mbox{\tiny +}}$, $\varphi_{\mbox{-}}$, $\varphi_0$ be a
partition of unity of ${\bf S}^{n-1}$, subordinate to the open
cover given in Lemma \ref{lem:21}. %(i.e. supp(\varphi_j)\subset \Omega_j$)
Then we decompose $\Phi$ as
$$
\Phi(\lambda,s)=\Phi_{\mbox{\tiny +}}(\lambda,s)+\Phi_{\mbox{-}}
(\lambda,s)+\Psi_0(\lambda,s),
$$
where
$$
\Phi_{\mbox{\tiny $\pm$}}(\lambda,s):=\int_{{\bf
S}^{n-1}}e^{i\lambda\phi(s,\omega)} b(s,\omega)\varphi_{\mbox{\tiny
$\pm$}}(\omega)d\omega
$$
and
$$
\Psi_0(\lambda,s):=\int_{{\bf S}^{n-1}}e^{i\lambda\phi(s,\omega)}
b(s,\omega)\varphi_0(\omega)d\omega.
$$
By using the stationary phase method for $\Psi_0$, and Lemma \ref{lem:21} and
\cite{So} (Corollary 1.1.8, \S1.2) for $\Phi_{\mbox{\tiny $\pm$}}$, one obtains the
following result.

\begin{lem} \label{lem:23}
For $\lambda>0$ and $s>a_1$ we have
\begin{equation} \label{eq:210}
\Phi(\lambda,s)=\lambda^{-\frac{n-1}{2}}e^{i\lambda
\phi(s,\omega_{\mbox{\tiny +}}(s))}\Psi_{\mbox{\tiny +}}(\lambda,s)
+\lambda^{-\frac{n-1}{2}}
e^{i\lambda\phi(s,\omega_{\mbox{-}}(s))}\Psi_{\mbox{-}}(\lambda,s)+\Psi_0(\lambda,s),
\end{equation}
where $\Psi_{\mbox{\tiny $\pm$}}$, $\Psi_0\in
C^\infty((0,\infty)\times[a_0,\infty))$ and
\begin{equation} \label{eq:211}
|\partial^k_\lambda\partial^j_s\Psi_{\mbox{\tiny
$\pm$}}(\lambda,s)|\le c_{k,j}(1+\lambda)^{-k}s^{-j}\quad {\rm for}\
k,j\in{\bf N}_0,
\end{equation}
\begin{equation} \label{eq:212}
|\partial^k_\lambda\partial^j_s\Psi_0(\lambda,s)|\le c_{k,j,l}
(1+\lambda)^{-l}s^{-j}\quad{\rm for}\ k,j,l\in{\bf N}_0.
\end{equation}
\end{lem}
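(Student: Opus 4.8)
The plan is to analyze the three pieces $\Phi_{\mbox{\tiny $\pm$}}$ and $\Psi_0$ separately, using a microlocal partition of unity on ${\bf S}^{n-1}$ subordinate to the cover $\{\Omega_0,\Omega_{\mbox{\tiny +}},\Omega_{\mbox{-}}\}$ from Lemma \ref{lem:21}, and to track the dependence of all error terms on both $\lambda$ and $s$. Since $b(s,\cdot)=s^{1-\frac nm}\rho^{n-1}\partial_s\rho\in S^0_{1,0}$, the amplitude and all its $\omega$-derivatives are uniformly bounded in $s\ge a$, while each $\partial_s$ acting on $b$ gains a factor $(1+s)^{-1}$; this is precisely what will produce the $s^{-j}$ factors in \eqref{eq:211}--\eqref{eq:212}. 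First I would treat $\Psi_0$: on $\mathrm{supp}\,\varphi_0\subset\Omega_0$ the phase $\phi(s,\cdot)$ has no critical points with $\|d_\omega\phi\|\ge c>0$ uniformly in $s$ (part (a) of Lemma \ref{lem:21}), so repeated non-stationary-phase integration by parts against the operator $L=\frac{1}{i\lambda}\,\frac{\langle d_\omega\phi,\nabla_\omega\rangle}{\|d_\omega\phi\|^2}$ gives, for every $l$, a bound of order $(1+\lambda)^{-l}$; carrying along $j$ extra $s$-derivatives and using the symbol estimates \eqref{eq:22} on $\rho$, $\sigma$, $\phi$, $b$ yields \eqref{eq:212}. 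One must check that differentiating the (bounded, $s$-independent in lowest order) quantities $d_\omega\phi$, $\|d_\omega\phi\|^{-2}$, etc.\ in $s$ only improves the decay, which follows from $\phi\in S^0_{1,0}$ and $\|d_\omega\phi\|\ge c$.

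Next I would handle $\Phi_{\mbox{\tiny $\pm$}}$. On $\mathrm{supp}\,\varphi_{\mbox{\tiny $\pm$}}$ the phase has a unique, non-degenerate critical point $\omega_{\mbox{\tiny $\pm$}}(s)$ (part (b) of Lemma \ref{lem:21}), with $\|(d^2_\omega\phi)^{-1}\|\le c_0$ uniformly in $s$; by Corollary 1.1.8 of \cite{So} (stationary phase with parameters) one extracts the factor $\lambda^{-(n-1)/2}e^{i\lambda\phi(s,\omega_{\mbox{\tiny $\pm$}}(s))}$ times a symbol $\Psi_{\mbox{\tiny $\pm$}}(\lambda,s)$ whose full asymptotic expansion in $\lambda^{-1}$ has coefficients built from $b$, the Hessian $d^2_\omega\phi$ and finitely many $\omega$-derivatives thereof, all evaluated at $\omega_{\mbox{\tiny $\pm$}}(s)$. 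Each such coefficient is bounded uniformly in $s$, so $|\partial^k_\lambda\Psi_{\mbox{\tiny $\pm$}}|\le c_k(1+\lambda)^{-k}$. To obtain the $s^{-j}$ gain in \eqref{eq:211} one differentiates in $s$: each $\partial_s$ either hits $b$ (gain $(1+s)^{-1}$ by \eqref{eq:22}), or hits $d^2_\omega\phi$ and its inverse, or hits the critical point through $\omega_{\mbox{\tiny $\pm$}}'(s)$; the last uses the bound $|\omega_{\mbox{\tiny $\pm$}}^{(k)}(s)|\le c_k(1+s)^{-k-1/m}$ from Lemma \ref{lem:21}, which is even stronger than $s^{-k}$. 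Combining, $\partial^j_s$ of each expansion coefficient is $O(s^{-j})$, giving \eqref{eq:211}. The exponential prefactor $e^{i\lambda\phi(s,\omega_{\mbox{\tiny $\pm$}}(s))}$ is pulled out explicitly and not counted in $\Psi_{\mbox{\tiny $\pm$}}$, so no derivative loss from the oscillation enters the symbol estimate.

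The main obstacle is the \emph{uniformity in $s$} of the stationary-phase expansion. The standard stationary-phase lemma in \cite{So} is stated for a fixed phase; here the phase, amplitude, critical point and Hessian all depend on the parameter $s\in[a_1,\infty)$, and I must verify that the constants in the remainder estimates can be chosen independent of $s$ and that $s$-differentiation of the expansion is legitimate term-by-term. This is exactly where the symbol-class structure $\phi,b\in S^0_{1,0}([a,\infty)\times{\bf S}^{n-1})$, the uniform lower bound \eqref{eq:24} on $d^2_\omega\phi$, and the decay rates for $\omega_{\mbox{\tiny $\pm$}}^{(k)}$ are indispensable: together they guarantee that the $C^N({\bf S}^{n-1})$-norms of phase and amplitude entering the remainder estimate are bounded uniformly in $s$, while $s$-derivatives only improve decay. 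A clean way to make this rigorous is to apply the stationary-phase lemma pointwise in $s$ to obtain $\Psi_{\mbox{\tiny $\pm$}}(\lambda,s)$, then prove the bounds on $\partial_s^j\Psi_{\mbox{\tiny $\pm$}}$ a posteriori by differentiating the defining oscillatory integral (after the change of variables that normalizes the critical point to the origin and the Hessian to its signature form) and re-applying non-stationary and stationary phase to the resulting integrals, whose amplitudes now carry the claimed $s^{-j}$ factors. The smoothness claims $\Psi_{\mbox{\tiny $\pm$}},\Psi_0\in C^\infty((0,\infty)\times[a_0,\infty))$ follow from differentiation under the integral sign, which is justified by the same uniform bounds.
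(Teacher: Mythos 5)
Your proposal is correct and follows essentially the same route as the paper, which itself only sketches the argument in one sentence: non-stationary phase (via Lemma \ref{lem:21}(a)) for $\Psi_0$, and the parameter-dependent stationary phase lemma of \cite{So} (Corollary 1.1.8) combined with Lemma \ref{lem:21}(b) for $\Phi_{\pm}$. Your discussion of the uniformity in $s$ of the stationary-phase expansion supplies precisely the detail that the paper delegates to \cite{C2} and \cite{So}.
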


%%%%%%%%%%%%%%%%%%%%%%%%%%%%%%%%%%%%%%%%%%%%%%%%%%%%%%%%%%%%%%%%%%%%%%%%%%%%%%%%%%%%%%%%%%%%%%%
%%%%%%%%%%%%%%%%%%%%%%%%%%%%%%%%%%%%%%%%%%%%%%%%%%%%%%%%%%%%%%%%%%%%%%%%%%%%%%%%%%%%%%%%%%%%%%%
\section{Estimates on the oscillatory integrals}\label{S3}

In this section we establish %the global
pointwise time-space
estimates of the oscillatory integrals \eqref{eq:14} and \eqref{eq:15}.
Like in \cite{KAY}, we aim at simultaneous estimates in the time and spatial variables.
This is a refinement of the analysis in \cite{C2}, where only spatial decay estimates of the
oscillatory integrals are derived. With our refined analysis we are able to give here global-in-time estimates on the wave solution.

\begin{thm}\label{th3.1}
Assume that the polynomial $P$ satisfies the conditions
{\bf (H1)} and {\bf (H2)} from \S\ref{S1}, and let $n\ge m$. Then there exists a
constant $C>0$ such that

\begin{equation} \label{eq:31}
|I_2(t,x)|\le
\left\{%}
\begin{array}{ll}
    C|t|^{-\frac{n-m_1}{m_1}} (1+|t|^{-{\frac{1}{m_1}}}|x|)^{-\mu}, \ {\rm for}\ 0<|t|\leq1, \\
    C|t|^{-\frac1m} (1+|t|^{-1}|x|)^{-\mu}, \ {\rm for} \ |t|\ge 1,\\
\end{array}
\right.
\end{equation}
where $m_1:=\frac{m}{2}$, $\mu:=\frac{mn-4n+2m}{2(m-2)}>0$.
\end{thm}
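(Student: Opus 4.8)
The plan is to reduce the full oscillatory integral $I_2(t,x)$ to the radial oscillatory integral $\Phi(\lambda,s)$ analysed in Lemma \ref{lem:23} and then to estimate the remaining one-dimensional $s$-integral by repeated integration by parts. First I would pass to polar coordinates in \eqref{eq:15} and split the $\rho$-integration into a compact piece $\rho\lesssim a_1^{1/m}$ (equivalently $s=P(\rho\omega)\lesssim a_1$) and the tail $s\ge a_1$. On the compact piece the integrand and phase are smooth with no relevant oscillation issues, so the contribution is bounded (and decays in $|x|$ after integrating by parts in $\xi$ using ellipticity, exactly as in \S1 of \cite{C2}); this piece only matters for the small-$|t|$ regime and is harmless. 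For the tail, the substitution $s=P(\rho\omega)$ turns the $\rho$-integral into an $s$-integral, and with the scaling $s\mapsto s^2$ (to match the $P^{1/2}$ in the phase) and $\lambda:=r=|x|$, the angular integral becomes precisely $\Phi(\lambda,s)$ with the amplitude $b(s,\omega)=s^{1-n/m}\rho^{n-1}\partial_s\rho$ coming from the Jacobian and the extra factor $P^{-1/2}$. Thus
\begin{equation} \label{eq:plan1}
I_2(t,x)=(\text{bounded compact part})+c\int_{a_1}^\infty e^{\pm its}\,\Phi\big(|x|,s\big)\,s^{?}\,ds,
\end{equation}
and inserting \eqref{eq:210} we get three terms: two ``$\pm$'' terms carrying the combined phase $\phi_{\pm}(t,|x|,s)=\pm st+|x|s^{2/m}\phi(s^2,\omega_\pm(s^2))$ and a non-stationary remainder term built from $\Psi_0$.

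The heart of the proof is then estimating these three $s$-integrals. For the remainder term involving $\Psi_0$, estimate \eqref{eq:212} gives arbitrarily fast decay in $\lambda=|x|$, so after a trivial integration by parts in $s$ (using $e^{\pm its}$) it contributes a term dominated by the right-hand side of \eqref{eq:31}. For the two stationary terms, I would integrate by parts $N$ times using the operator $\frac{1}{i\,\partial_s\phi_\pm}\partial_s$, exploiting the lower bounds \eqref{eq:26}, \eqref{eq:27} on $\partial_s\phi_\pm$ together with the higher-derivative bounds \eqref{eq:29} on $\phi_\pm$ and the symbol estimates \eqref{eq:211} on $\Psi_\pm$. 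The key point is a careful bookkeeping of the two competing scales in $\partial_s\phi_\pm \sim |t| + |x| s^{2/m-1}$: when $|x|s^{2/m-1}\lesssim |t|$ the factor $|t|^{-1}$ dominates and one reads off the purely temporal decay; when $|x|s^{2/m-1}\gtrsim |t|$ the factor $|x|^{-1}s^{1-2/m}$ dominates and one gains spatial decay. Splitting the $s$-range accordingly at the balance point $s_\ast$ where $|x|s_\ast^{2/m-1}\sim |t|$, and combining with the power $s^{-(n-1)/2}$ from $\lambda^{-(n-1)/2}=|x|^{-(n-1)/2}$ in \eqref{eq:210} and the Jacobian powers of $s$, one arrives at the mixed bound $|t|^{-\alpha}(1+|t|^{-\beta}|x|)^{-\mu}$ with the exponents as stated, after distinguishing $|t|\le 1$ (where the integral runs over large $s$ and the $m_1=m/2$ scaling appears through $s^{2/m}$ against $st$) from $|t|\ge 1$ (where the relevant regime is moderate $s$ and the dispersive exponent is the Klein–Gordon-like $1/m$).

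The main obstacle, and where the condition $n\ge m$ and the precise value $\mu=\frac{mn-4n+2m}{2(m-2)}$ enter, is ensuring that all the $s$-integrals over $[a_1,\infty)$ actually converge and that the number of integrations by parts can be chosen uniformly to produce the exponent $\mu$ rather than something smaller: each integration by parts against $\partial_s\phi_-\sim |x|s^{2/m-1}$ costs a power $s^{1-2/m}$ in the denominator, so after $k$ steps one has a net power $s^{-(n-1)/2 - k(1-2/m)+\text{(Jacobian)}}$, and $\mu$ is exactly the largest decay rate in $|t|^{-1/m_1}|x|$ (resp. $|t|^{-1}|x|$) that is still integrable in $s$. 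I would also have to handle the genuinely stationary points in $s$ of $\phi_-$ (where $\partial_s\phi_- $ can vanish, for the ``$-$'' sign and $|x|\sim |t|s^{1-2/m}$) by a van der Corput / stationary phase argument in $s$ using the second-derivative bound \eqref{eq:28}, which supplies the additional $|t|^{-1/2}$-type gain that is already baked into the stated exponents $\frac{n-m_1}{m_1}$ and $\frac1m$. Finally, patching the local estimates over the finite cover $\{U_{z_0}\}$ of ${\bf S}^{n-1}$ and summing over the two signs $\pm$ in the phase gives the global estimate \eqref{eq:31}.
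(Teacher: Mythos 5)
Your treatment of the high-frequency part in the regime $|x|\gtrsim |t|\ge 1$ is essentially the paper's argument: polar coordinates, reduction to $\Phi(\lambda,s)$ via Lemma \ref{lem:23}, integration by parts against $\partial_s\phi_\pm$ with the bookkeeping of the two scales $|t|$ and $|x|s^{2/m-1}$ (the paper organizes this as an interpolation \eqref{eq:35} of the two bounds on $g^{(k)}$ rather than a splitting at the balance point, but these are equivalent), and Van der Corput near the zero of $\partial_s\phi_-$. However, there are two genuine gaps. First, the compactly supported low-frequency piece is \emph{not} harmless and does \emph{not} ``only matter for small $|t|$'': for $|t|\ge1$ the high-frequency part actually decays like $|t|^{-n/2}$, so the stated rate $|t|^{-1/m}$ is dictated entirely by the low-frequency piece. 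Your proposed bounds for it (boundedness, plus $|x|^{-k}$ from integration by parts in $\xi$) give no time decay at all — each $\xi$-derivative of $e^{\pm itP^{1/2}}$ costs a factor $t$, so one only gets $(t/|x|)^k$, which is useless for $|x|\le t$. The paper instead views this piece as the Fourier transform of a measure carried on the graph $\{z=\pm P^{1/2}(\xi)\}$, shows this surface has finite type $\le m$ (Lemma \ref{lem:51}), and invokes Stein's decay theorem to get \eqref{eq:310a} and hence \eqref{eq:32}. Without some such joint $(t,x)$-oscillation argument the large-time estimate fails.

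Second, the reduction to $\Phi(\lambda,s)$ with $\lambda=|x|s^{2/m}$ degenerates when $|x|$ is small: the expansion \eqref{eq:210} is a large-$\lambda$ stationary-phase asymptotic on the sphere and carries the prefactor $\lambda^{-\frac{n-1}{2}}$, which blows up as $|x|\to0$, so your scheme does not cover the region $|x|\le t$ (for $t\ge1$). The paper handles that region by a genuinely different, $n$-dimensional stationary-phase argument for the phase $\pm P^{1/2}(\xi)+\langle x/t,\xi\rangle$, whose critical points lie in a fixed ball and whose Hessian equals that of $P^{1/2}$; the needed non-degeneracy of $\det(\partial_i\partial_j P^{1/2})$ for large $\xi$ is exactly the content of Lemma \ref{lem:53}, and this is where the hypothesis \textbf{(H2)} enters for that regime. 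Your proposal never invokes the Hessian of $P^{1/2}$. Finally, a more minor point: the paper obtains the $0<|t|\le1$ estimate not by re-running the $s$-integral analysis (which would require uniformity of $a_1$, $c_1$, $c_2$ under rescaling) but by the clean scaling $P_t(\xi)=t^2P(t^{-2/m}\xi)$ reducing to $t=1$; your sketch of the small-$t$ case is too vague to substitute for this.
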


\begin{proof} In the sequel,  $C$ denotes some generic (but not necessarily identical) positive constants,
independent of $t$, $\xi$, $x$, and so forth. Since the integrals
$I_2(t,x)$ and $I_2(-t,x)$ are structurally identical, it suffices to
estimate $I_2(t,x)$ for $t>0$. We shall now analyze $I_2$ for three different
cases of its arguments, starting with the most delicate situation.

\noindent
\underline{ Case (i):} $t\ge1$ and $r:=|x|\ge t$.

Choose $\psi\in C^\infty({\bf R})$ such that
$$ \psi(s)=
\left\{%}
\begin{array}{ll}
    0, \ {\rm for}\ s\le a_1 \\
   1, \ {\rm for}\ s>2a_1,\\
\end{array}
\right. $$
 where $a_1$ is given in Lemma \ref{lem:22}. We write
\begin{eqnarray*}
I_2(t,x)&=&\int_{{\bf R}^n}e^{i(\langle x,\xi\rangle\pm
tP^{1/2}(\xi))}P^{-\frac{1}{2}}(\xi)\psi(P^{1/2}(\xi))d\xi\\
%\end{eqnarray*}
%\begin{eqnarray*}
&+&\int_{{\bf R}^n} e^{i(\langle x,\xi\rangle\pm
tP^{1/2}(\xi))}P^{-\frac{1}{2}}(\xi)[1-\psi(P^{1/2}(\xi))]d\xi\\
&=:&I_{21}(t,x)+I_{22}(t,x).
\end{eqnarray*}
First we rewrite $I_{22}$ as the Fourier transform of a measure, supported on the graph $S:=\{z=\pm P^{1/2}(\xi);\:\xi\in {\bf R}^n\}\subset {\bf R}^{n+1}$:
\begin{equation}\label{surface-meas}
  I_{22}(t,x)=\int_{{\bf R}^{n+1}}e^{i(\langle x,\xi\rangle + tz)} P^{-\frac{1}{2}}(\xi)[1-\psi(P^{1/2}(\xi))] \delta(z\mp P(\xi)^{1/2})\,d\xi\,dz\,.
\end{equation}
Since the polynomial $P$ is of order $m$, the supporting manifold of the above integrand is of type less or equal $m$ (in the sense of \S~VIII.3.2, \cite{St}; see the Appendix in \S\ref{S:app}). Then, Theorem 2 of \S~VIII.3 in \cite{St} implies
\begin{equation} \label{eq:310a}
  |I_{22}(t,x)|\le C(1+|t|+|x|)^{-\frac1m}\qquad \forall t,\,x.
\end{equation}
This can be generalized: 
Since $f(t,\xi):=e^{\pm itP^{1/2}}P^{-1/2}[1-\psi(P^{1/2})]\in C_c^\infty({\bf
R}^n)$ for every $t>0$, we obtain by integration by parts
$$
  I_{22}(t,x)=i \int_{{\bf R}^n}e^{i\langle x,\xi\rangle}
  \frac{x}{|x|^2}\cdot \nabla_\xi f(t,\xi)d\xi\,.
$$
Proceeding recursively this implies (in the spirit of the Paley-Wiener-Schwartz theorem)
\begin{equation} \label{eq:31a}
|I_{22}(t,x)|\le C_kt^kr^{-k}\quad{\rm for}\ k\in{\bf N}_0,\,x\ne0,\,t\ge1,
\end{equation}
and hence also $\forall\,k\ge0$.
But proceeding as in \eqref{surface-meas} yields the improvement
\begin{equation} \label{eq:32}
  |I_{22}(t,x)|\le C_k |t|^{-\frac1m}(1+|t|^{-1}|x|)^{-(k+\frac1m)}\ {\rm for}\ \
  |t|\ge 1, \ x\in{\bf R}^n, \ \forall \ k\ge0.
\end{equation}

\begin{comment}
This implies also
\begin{equation} \label{eq:32}
|I_{22}(t,x)|\le C t^\mu|x|^{-\mu},
\end{equation}
%with the non-integer exponent $\mu:=\frac{mn-4n+2m}{2(m-2)}>0$, 
by using the following simple estimate:
$$
\gamma^\mu\ge
\left\{%}
\begin{array}{ll}
   \gamma^{[\mu]}, \quad\,\, {\rm for}\ \gamma\ge1 \\
   \gamma^{[\mu]+1}, \ {\rm for}\ 0<\gamma<1\\
\end{array}
\right.
$$
\end{comment}

To estimate $I_{21}$, we shall derive an $\varepsilon$--uniform estimate of its regularization
\begin{equation} \label{eq:32b}
J_\varepsilon(t,x):=\int_{{\bf R}^n}e^{-\varepsilon
P^{1/2}(\xi)+i(\langle x,\xi\rangle\pm
tP^{1/2}(\xi))}P^{-1/2}(\xi)\psi(P^{1/2}(\xi))d\xi\quad{\rm for}\
\varepsilon>0.
\end{equation}
By the polar coordinate transform and the change of variables
$(\rho,\omega)\to(s,\omega)$ such that
$\rho=\rho(s,\omega)$ (with $P(\rho\omega)=s$),  we have
\begin{eqnarray} \label{eq:32a}
J_\varepsilon(t,x)\!\!&\!\!\!=\!\!\!\!&\int^\infty_0\!\!\!\int_{{\bf
S}^{n-1}}\!\!\!e^{-\varepsilon P^{1/2}(\rho\omega)+i(\rho\langle
x,\omega\rangle\pm tP^{1/2}(\rho\omega))}P^{-1/2}(\rho\omega)
\psi(P^{1/2}(\rho\omega))\rho^{n-1}d\omega d\rho\nonumber \\
&\!\!\!=\!\!\!\!&\int^\infty_0\!\!\!\int_{{\bf S}^{n-1}}e^{-\varepsilon \sqrt{s}\pm
it\sqrt{s}+
ir\rho\langle z,\omega\rangle}\psi(\sqrt{s})s^{-1/2}\rho^{n-1}\partial_s\rho d\omega ds\\
&\!\!\!=\!\!\!\!&\int^\infty_0 e^{-\varepsilon \sqrt{s}\pm it\sqrt{s}}s^{\frac{n}{m}-1}s^{-1/2}\psi(\sqrt{s})\Phi(rs^\frac{1}{m},s)ds\nonumber \\
&\!\!\!=\!\!\!\!&2\int^\infty_0
e^{-\varepsilon s\pm its}s^{\frac{2n}{
m}-2}\psi(s)\Phi(rs^\frac{2}{m},s^2)ds\,,\nonumber
\end{eqnarray}
where $z:=x/|x|$ enters in the oscillatory integral $\Phi$ from \eqref{eq:29a}.
For the transformation $\rho\to s$ we used that $\psi(s)=0$ on $[0,a_1]$ (see \S2 in \cite{C2} for a more detailed discussion). Here and in the sequel we assume that the functions $\Phi,\,\Phi_\pm,\,\phi_\pm,\,\Psi_\pm,\,\Psi_0$ are smoothly extended to $[0,a]$, in order to write the $s$--integrals on ${\bf R}_+$. The precise form of this extension, however, will not matter -- due to the cut-off function $\psi$.

%Due to the compactness of ${\bf S}^{n-1}$ we may assume, without loss of generality,
%that $u\in U_{u_0}$ (see \S\ref{S2} for
%the definition of $U_{u_0}$).
The main goal of this proof is to derive, for any $z_0\in{\bf S}^{n-1}$,
an $\varepsilon$--uniform estimate of the form $|J_\varepsilon(t,x)|\le C t^{-\nu} r^{-\mu}$, with
$\nu:=\frac{n-m}{m-2}\ge0$ (since $ n\ge m)$.
Because of the Lemmata \ref{lem:21}--\ref{lem:23}, this estimate will hold uniformly on
$z=x/|x|\in U_{z_0}$ with a constant $C=C(z_0)$.
Due to the compactness of ${\bf S}^{n-1}$, finitely many points $z_1,...,z_N$ will suffice to yield  a uniform estimate of $|J_\varepsilon(t,x)|$ on $\{r\ge t\ge1\}$, using
$\displaystyle C=\max_{j=1,...,N} C(z_j)$.
Here, we only consider the case of
$e^{-\varepsilon s+its}$; for $e^{-\varepsilon s-its}$ the estimates
are analogous.

Following Lemma \ref{lem:23} we decompose $J_\varepsilon$ as follows:
\begin{eqnarray*}
J_\varepsilon(t,x)&=&2r^{-\frac{n-1}{2}}\int_0^\infty e^{-\varepsilon
s+i\phi_{\mbox{\tiny +}}(t,r,s)}
s^{\frac{n+1}{m}-2}\psi(s)\Psi_{\mbox{\tiny +}}(rs^{\frac{2}{m}},s^2)ds\\
&&+2r^{-\frac{n-1}{2}}\int_0^\infty e^{-\varepsilon
s+i\phi_{\mbox{-}}(t,r,s)}
s^{\frac{n+1}{m}-2}\psi(s)\Psi_{\mbox{-}}(rs^{\frac{2}{m}},s^2)ds\\
&&+2\int^\infty_0e^{-\varepsilon s+its}s^{\frac{2n}{m}-2}\psi(s)
\Psi_0(rs^{\frac{2}{m}},s^2)ds\\
&=:&R_\varepsilon^{\mbox{\tiny
+}}(t,x)+R_\varepsilon^{\mbox{-}}(t,x)+R_\varepsilon^0(t,x)\,,
\end{eqnarray*}
where $\phi_{\mbox{\tiny $\pm$}}$ is defined in Lemma \ref{lem:22}.

We shall
first estimate the integral $R_\varepsilon^0(t,x)$ and set
$v_0(s):=s^{\frac{2n}{m}-2}\psi(s)\Psi_0(rs^{\frac{2}{m}},s^2)$. By the
Leibniz rule and \eqref{eq:212}, we have
$$
|v_0^{(k)}(s)|\le C(rs^\frac{2}{m})^{-l}s^{\frac{2n}{m}-2-k}\quad{\rm
for}\ l,k\in{\bf N}_0,
$$
where $r\ge1$ and $s\ge a_1$. Choose $l\ge\mu\ge0$ and
$k\ge\nu\ge0$. It thus follows by integration by parts that
\begin{equation} \label{eq:33}
|R^0_{\varepsilon}(t,x)|\le Ct^{-k}\int^\infty_{a_1}(rs^\frac{2}{
m})^{-l}s^{\frac{2n}{m}-2-k}ds\le Ct^{-k}r^{-l}\le Ct^{-\nu}r^{-\mu}.
\end{equation}

 To estimate the integral $R_\varepsilon^{\mbox{\tiny +}}(t,x)$, for
given $r\ge t\ge1$,  we set
$$
\left\{%}
\begin{array}{ll}
   u_{\mbox{\tiny +}}(s):=-\varepsilon s+i\phi_{\mbox{\tiny +}}(t,r,s)\,, \\
   v_{\mbox{\tiny +}}(s):=s^{\frac{n+1}{m}-2}\psi(s)\Psi_{\mbox{\tiny +}}(rs^\frac{2}{m},s^2)\\
\end{array}
\right.
$$
for $s\ge 0$. Since $u_{\mbox{\tiny +}}'(s)\ne0$ for $s\ge a_1$,
we can define $D_\ast f:=(gf)'$ for $f\in C^1(0,\infty)$, where
$g:=-1/u_{\mbox{\tiny +}}'$. It is not hard to show
\begin{equation} \label{eq:34}
D^j_\ast v_{\mbox{\tiny +}}=\sum_\alpha c_\alpha
g^{(\alpha_1)}\cdots g^{(\alpha_j)}v_{\mbox{\tiny
+}}^{(\alpha_{j+1})} \quad{\rm for}\ j\in{\bf N} \,,
\end{equation}
where
the sum runs over all $\alpha=(\alpha_1,\cdots, \alpha_{j+1})\in{\bf
N}^{j+1}_0$ such that $|\alpha|=j$ and
$0\le\alpha_1\le\cdots\le\alpha_j$. Since \eqref{eq:26} and \eqref{eq:29} imply,
respectively, $|g(s)|\le Cr^{-1}s^{1-\frac{2}{m}}$ and
$$
|u_{\mbox{\tiny +}}^{(k)}(s)|\le Crs^{\frac{2}{m}-k}\quad{\rm for}\
k=2,3,\cdots,
$$
we find by induction on $k$:
$$
|g^{(k)}(s)|\le Cr^{-1}s^{1-\frac{2}{m}-k}\quad{\rm for}\ k\in{\bf
N}_0,
$$
which shall yield the spatial decay of $I_2$. To derive the time decay of $I_2$, we note that
\eqref{eq:26} also implies $|g(s)|\le t^{-1}$. Using this inequality for just one factor in $g^{(k)}$ we obtain:
$$
|g^{(k)}(s)|\le Ct^{-1}s^{-k}\quad{\rm for}\ k\in{\bf N}_0\,.
$$
The novel key step is now to interpolate these two inequalities, which will allow us to derive estimates also for large time.
We have for any $\theta\in[0,1]$:
\begin{equation} \label{eq:35}
|g^{(k)}(s)|\le Ct^{\theta-1}r^{-\theta}s^{\theta(1-\frac{2}{
m})-k}\quad{\rm for}\ k\in{\bf N}_0.
\end{equation}
On the other hand we have by the Leibniz rule and \eqref{eq:211}:
\begin{equation} \label{eq:36}
|v_{\mbox{\tiny +}}^{(k)}(s)|\le Cs^{\frac{n+1}{m}-2-k}\quad{\rm
for}\ k\in{\bf N}_0.
\end{equation}
It thus follows from \eqref{eq:34} -- \eqref{eq:36} that
\begin{equation} \label{eq:37}
|D^j_\ast v_{\mbox{\tiny +}}(s)|\le Ct^{j(\theta-1)}
r^{-j\theta}s^{j\theta(1-\frac{2}{m})+\frac{n+1}{m}-2-j}\quad{\rm
for}\ j\in{\bf N}_0,
\end{equation}
where $D^0_\ast v_{\mbox{\tiny +}}:=v_{\mbox{\tiny +}}$. The particular choice
$\theta=\frac{\mu}{n}$, $j=n$ yields
\begin{equation} \label{eq:37a}
|D^n_\ast v_{\mbox{\tiny +}}(s)|\le Ct^{\mu-n}
r^{-\mu}s^{\frac{-mn-2n+2}{2m}-1}.
\end{equation}
Noting that $\mu-n<-\nu$, one gets by integration by parts
$$
|R^{\mbox{\tiny +}}_\varepsilon(t,x)|=2r^{-\frac{n-1}{
2}}\Big{|}\int_0^\infty e^{u_{\mbox{\tiny +}}}(D^n_\ast
v_{\mbox{\tiny +}})ds\Big{|} \le Ct^{\mu-n}r^{-\frac{n-1}{2}-\mu}\le
Ct^{-\nu}r^{-\mu}.\\[2mm]
$$

We now turn to the integral $R_\varepsilon^{\mbox{-}}(t,x)$.
Here we put
$$
\left\{%}
\begin{array}{ll}
   u_{\mbox{-}}(s):=-\varepsilon s+i\phi_{\mbox{-}}(t,r,s)\,, \\
   v_{\mbox{-}}(s):=s^{\frac{n+1}{m}-2}\psi(s)\Psi_{\mbox{-}}(rs^\frac{2}{m},s^2)\\
\end{array}
\right.
$$
for $s\ge 0$.
We shall denote $s_0:=(r/t)^\frac{m}{m-2}$, $c'_1:=(c_1/2)^\frac{m}{m-2}$, and
$c'_2:=(2c_2)^\frac{m}{m-2}$, with $c_1$ and $c_2$ given in Lemma \ref{lem:22}.
Now we decompose $R_\varepsilon^{\mbox{-}}$ as
\begin{eqnarray*}
R_\varepsilon^{\mbox{-}}(t,x)&=&2r^{-\frac{n-1}{
2}}\Big{\{}\int_0^{c'_1s_0}
+\int_{c'_1s_0}^{c'_2s_0}+\int_{c'_2s_0}^\infty\Big{\}}
e^{u_{\mbox{-}}(s)}v_{\mbox{-}}(s)ds\\
&=:&R_{\varepsilon 1}^{\mbox{-}}(t,x)+R_{\varepsilon
2}^{\mbox{-}}(t,x)+R_{\varepsilon 3}^{\mbox{-}}(t,x)\,.
\end{eqnarray*}
This decomposition is motivated by the fact that the phase $\partial_s \phi_{\mbox{-}}(t,r,\cdot)$
is negative on $[0,c'_1s_0)$, positive on $[c'_2s_0,\infty)$, and is has exactly one zero on
$[c'_1s_0,c'_2s_0]$ (cf.\ \eqref{eq:27}, \eqref{eq:28}).

Integrating by parts we obtain
$$
R^{\mbox{-}}_{\varepsilon 3}(t,x)=2r^{-\frac{n-1}{
2}}\Big{(}\frac{e^{u_{\mbox{-}}(c'_2s_0)}}{u_{\mbox{-}}'(c'_2s_0)}
\sum^{n-1}_{j=0}(D^j_\ast
v_{\mbox{-}})(c'_2s_0)+\int_{c'_2s_0}^\infty
e^{u_{\mbox{-}}}(D^n_\ast v_{\mbox{-}})ds\Big{)}.
$$
Here and in the sequel, the differential operator $D_*f=(gf)'$ is considered with $g=-1/u_{\mbox{-}}'$.
Since \eqref{eq:27} implies $|u_{\mbox{-}}'(s)|\ge c_2rs^{\frac{2}{
m}-1}$ for $s\ge c'_2s_0$, we find that $v_{\mbox{-}}(s)$ also
{satisfies (the analogues of) \eqref{eq:37} and \eqref{eq:37a} %(with $\theta=1$)
for $s\ge c'_2s_0$. }
If $c'_2s_0\le a_1$, then $(D^j_\ast
v_{\mbox{-}})(c'_2s_0)=0$ for $j=0,\cdots ,n-1$ {(note that $\psi\equiv 0$ on $[0,a_1]$).
Integration by parts then yields}
\begin{eqnarray*}
|R^{\mbox{-}}_{\varepsilon 3}(t,x)|=\big|2r^{-\frac{n-1}{
2}}\int_{a_1}^\infty
e^{u_{\mbox{-}}}(D^n_\ast v_{\mbox{-}})ds\big|\le Ct^{-\nu}r^{-\mu},
\end{eqnarray*}
exactly as done for $R^{\mbox{\tiny +}}_\varepsilon(t,x)$.
If $c'_2s_0> a_1$, % (i.e. $s_0> a_1/c'_2$ ),
then
\begin{eqnarray*}
|R^{\mbox{-}}_{\varepsilon 3}(t,x)|&\!\le\!& Cr^{-\frac{n-1}{2}}\Big{(}
(rs_0^{\frac{2}{m}-1})^{-1}\sum_{j=0}^{n-1}r^{-j}s_0^{-\frac{2j-n-1}{
m}-2}+\int_{c'_2s_0}^\infty r^{-n}s^{-\frac{n+m-1}{m}-1}ds\Big{)} \\
&\!\le\!&Cr^{-\frac{n-1}{2}}(r^{-1}s_0^\frac{n-m-1}{m}\sum_{j=0}^{n-1}
(rs_0^\frac{2}{m})^{-j}+r^{-n} s_0^{-\frac{n+m-1}{m}}).
\end{eqnarray*}
Noting that $r\ge1$, $s_0> a_1/c'_2$, and $t\ge1$,  it
follows that
$$
|R^{\mbox{-}}_{\varepsilon 3}(t,x)|\le
Cr^{-\frac{n+1}{2}}s_0^\frac{n-m-1}{m}
= Ct^{-\nu}r^{-\mu} s_0^{-\frac12}t^{-\frac12}
%=Ct^{-\frac{m(n+1)}{2(m-2)}}r^{-\frac{mn-3n+2m-1}{2(m-1)}}
\le Ct^{-\nu}r^{-\mu}.
$$

Next we turn to $R^{\mbox{-}}_{\varepsilon 1}(t,x)$, which is 0 for $c'_1s_0< a_1$.
If $c'_1s_0\ge a_1$, we use $|u_{\mbox{-}}'(s)|\ge\frac{1}{2}c_1rs^{\frac{2}{m}-1}$ for
$a_1\le s\le c'_1s_0$. Then, a slight modification of the above method
yields again $R^{\mbox{-}}_{\varepsilon 1}(t,x)\le Ct^{-\nu}r^{-\mu}$.

To estimate $R_{\varepsilon 2}^{\mbox{-}}(t,x)$,  it suffices to
estimate the integral
\begin{eqnarray*}
%R_{\varepsilon2}^{\mbox{-}}(t,x)
R_{0\,2}^{\mbox{-}}(t,x)
&=&2r^{-\frac{n-1}{2}}\int_{c'_1s_0}^{c'_2s_0}
e^{i\phi_{\mbox{-}}(t,r,s)}v_{\mbox{-}}(s)ds\\
&=&2r^{-\frac{n-1}{2}}s_0\int_{c'_1}^{c'_2}e^{i\phi_{\mbox{-}}
(t,r,s_0\tau)}v_{\mbox{-}}(s_0\tau)d\tau\,,
\end{eqnarray*}
where the interval of integration is now independent of the parameters $t,\,r$.
We obtain from \eqref{eq:28} that
$$
|\partial_\tau^2\phi_{\mbox{-}}(t,r,s_0\tau)|\ge
c_1rs_0^2(s_0\tau)^{\frac{2}{m}-2} \ge Crs_0^\frac{2}{m}
$$
for $\tau\in [c'_1,c'_2]$. Since $v_{\mbox{-}}(s)$ also satisfies (the analogue of)
\eqref{eq:36}, we obtain by using (a corollary of) the Van der Corput lemma (cf. \cite{St}, p.\ 334)
(uniformly for $\varepsilon>0$ small enough):
\begin{eqnarray*}
%|R_{\varepsilon2}^{\mbox{-}}(t,x)|
|R_{0\,2}^{\mbox{-}}(t,x)|
&\le& Cr^{-\frac{n-1}{2}}s_0(rs_0^\frac{2}{
m}) ^{-\frac{1}{2}}\Big{(}|v_{\mbox{-}}(c'_2s_0)|
+\int_{c'_1}^{c'_2}|s_0v_{\mbox{-}}'(s_0\tau)|d\tau\Big{)}\\
&\le&Cr^{-\frac{n-1}{2}}s_0(rs_0^\frac{2}{m})
^{-\frac{1}{2}}s_0^{\frac{n+1}{m}-2}\\
&=&C\,t^{-\nu}r^{-\mu}.
\end{eqnarray*}

The dominated convergence theorem implies that
$J_\varepsilon(t,\cdot)$ converges (as $\varepsilon\to0$) uniformly for
$x$ in compact subsets of $\{x\in{\bf R}^n;\ |x|\ge 1\}$. By
summarizing the above estimates we have
$$
|I_{21}(t,x)|\le Ct^{-\nu}|x|^{-\mu}\quad{\rm for}\ %t\ge1\ {\rm and}\ |x|\ge 1.
|x|\ge t\ge1,
$$
%If $t\geq1$, $|x|\geq1$, and $t^{-1}|x|\geq 1$, then
and hence
$$
|I_{21}(t,x)| %\le Ct^{-\nu}|x|^{-\mu}
\leq Ct^{-\frac{n}{2}}(1+t^{-1}|x|)^{-\mu}\leq Ct^{-\frac{1}{m}}(1+t^{-1}|x|)^{-\mu}
\quad{\rm for}\ |x|\ge t\ge1.
$$
%Note that $(1+t^{-1}|x|)\sim t^{-1}|x|$ for $ t\ge 1$, $|x|\ge 1$
%and  $ t^{-1}|x|\geq 1$,
%hence (3.2) yields that
%and by \eqref{eq:32}
%$$
%|I_{22}(t,x)|\le C(t^{-1}|x|)^{-\mu}\le C (1+t^{-1}|x|)^{-\mu}.
%$$
%Therefore we obtain
Combining this with the estimate \eqref{eq:32} on $I_{22}$ (put $k=\mu-\frac1m$), we have
$$
|I_2(t,x)|\le Ct^{-\frac{1}{m}}(1+t^{-1}|x|)^{-\mu} \quad {\rm for}\ |x|\ge t\ge1.
%t\ge1\ , |x|\ge 1, \ {\rm and}\ t^{-1}|x|\geq 1.
$$

\begin{comment}
If $t\geq1$, $|x|\geq1$, and $t^{-1}|x|< 1$, then
$$
|I_{21}(t,x)|\leq Ct^{-\nu}\leq Ct^{-\nu_1}(1+t^{-1}|x|)^{-\mu},
$$
with
$$\nu_1:=\min(\nu,\frac{1}{m})=
\left\{
\begin{array}{ll}
   0, \quad\,\,\, {\rm for}\ n=m \\
   \frac1m, \quad\, {\rm for}\ n>m\,.\\
\end{array}
\right.
$$
%and from \eqref{eq:31a}
%$$
%|I_{22}(t,x)|\leq C_0\leq C(1+t^{-1}|x|)^{-\mu}.$$ So we also get
Combining this with \eqref{eq:32} yields again
$$\ |I_2(t,x)|\leq
Ct^{-\nu_1}(1+t^{-1}|x|)^{-\mu}\ \ {\rm for}\ t\ge1\ , |x|\ge 1,
 \ {\rm and}\ t^{-1}|x|\le 1,
$$
and in total
$$\ |I_2(t,x)|\leq
Ct^{-\nu_1}(1+t^{-1}|x|)^{-\mu}\ \ {\rm for}\ t\ge1\ , |x|\ge 1.
$$
\end{comment}

%%%%%%%%%%%%%%%%%%%%%%%%%%%%%%%%%%%%%%%%%%%%%%%%%%%%%%%%%%%%%%%%%%%%%%%%%%%%%%%%%%%%%%%%%%%%%%%
\vskip 0.3cm
\noindent
%\underline{ Case (ii):} $t\geq 1$ and $|x|\leq 1$.\\
\underline{ Case (ii):} $t\geq 1$ and $|x|\leq t$.\\
For $I_{21}$ we shall prove now that
\begin{equation}\label{I1-decay}
|I_{21}(t,x)|\le C |t|^{-n/2} \quad{\rm for}\ |t|\ge1\ {\rm and}\ |x|\le |t|.
\end{equation}
We proceed as in \cite{KAY} and write the integral $I_{21}(t,x)$ as follows:
\begin{eqnarray*}
I_{21}(t,x)&=&\int_{{\bf R}^n}e^{it(\pm P^{1/2}(\xi)+\langle x/t,\xi\rangle)}
P^{-1/2}(\xi)\psi(P^{1/2}(\xi))d\xi\\
&=:&\int_{{\bf R}^n}e^{it\Phi(\xi, x,t)}P^{-1/2}(\xi)\psi(P^{1/2}(\xi))d\xi,
\end{eqnarray*}
but we shall focus on the case $\Phi=P^{1/2}(\xi)+\langle x/t,\xi\rangle$, and the other case is analogous.

Since  $|x/t|\le 1$, $P^{1/2}(\xi)\le c_1|\xi|^{m_1}$, and $|\nabla P(\xi)|\ge c_2|\xi|^{m-1}$ for large $|\xi|$,   the possible critical points satisfying
$$
  \nabla_\xi\Phi(\xi, x,t)=\frac{\nabla P(\xi)}{2P^{1/2}(\xi)}+\frac{x}{t}=0
$$
must be located in some bounded ball.   In order to apply later the stationary phase principle, let $\Omega\subset {\bf R}^n$ be some open set such that ${\rm supp}\, \psi(P^{1/2})\subset \Omega$ and $|\nabla P^{1/2}(\xi)|\ge c|\xi|^{m_1-1}$ on $\Omega$.
Note that the constant $a_1$ (from the definition of $\psi$ and Lemma \ref{lem:22}) could be increased, if necessary, such that both of those conditions can hold.
Then  we decompose $\Omega$ into $\Omega_1\cup\Omega_2$, where
 $$\Omega_1=\left\{\xi\in \Omega\,;\ \ |\nabla P^{1/2}(\xi)+\frac{x}{t}|<\frac{1}{2}|\nabla P^{1/2}(\xi)|+1\right\}$$
 and
 $$\Omega_2=\left\{\xi\in \Omega\,;\ \ |\nabla P^{1/2}(\xi)+\frac{x}{t}|>\frac{1}{4}|\nabla P^{1/2}(\xi)|\right\}.$$
Since $|\frac{x}{t}|\le 1$ and $|\nabla P^{1/2}(\xi)|\rightarrow\infty $ as $|\xi|\rightarrow\infty$,  $\Omega_1$ must be a bounded domain and includes all critical points of $\Phi$ inside $\Omega$.  Now we choose smooth functions $\eta_1(\xi)$ and $\eta_2(\xi)$ such that supp $\eta_j\subset \Omega_j$ and $\eta_1(\xi)+\eta_2(\xi)=1$ in $\Omega$. And  we decompose $I_{21}$ as
\begin{eqnarray*}
I_{21}(t,x)&=&I_{211}(t,x)+I_{212}(t,x),\\ 
I_{21j}(t,x) 
&:=&\int_{{\bf R}^n}e^{it\Phi(\xi, x,t)}\eta_j(\xi)P^{-1/2}(\xi)\psi(P^{1/2}(\xi))d\xi; \ j=1,2.
\end{eqnarray*}
\vskip 0.2cm

To estimate $I_{211}$ we note that %the determinant of the Hessian matrix
$$\mbox{det}(\partial_{\xi_i}\partial_{\xi_j}\Phi)_{n\times n}(\xi,x,t)
=\mbox{det}(\partial_{\xi_i}\partial_{\xi_j}P^{1/2})_{n\times n}(\xi)\,.
$$
Lemma \ref{lem:53} (see the Appendix below) implies that the r.h.s.\ is nonzero on $\Omega$ (if necessary, we can increase the value of $a_1$ to satisfy the requirement), that is,  the Hessian matrix is non-degenerate on $\Omega$.  Moreover, $|\partial_\xi^{\alpha}\Phi|\le C_\alpha$ on $\Omega_1$ for any multi-index $\alpha \in {\bf N}_0^n$.  Hence we obtain  by the stationary phase principle  that
\begin{equation} \label{eq:38a}
|I_{211}(t,x)|\le C|t|^{-n/2}.
\end{equation}

%Note that this integral and the subsequent integrations by parts can be made meaningful by inserting a series of  smooth cut-off functions $\phi(\epsilon\xi)$ for any $0<\epsilon<1$. However, this is just a technical procedure, and we refer to \cite{DY} for the details in a similar situation.
To estimate  $I_{212}$, we shall use some cut-off in order to make the subsequent integrations by parts meaningful (cp.\ to the procedure in \eqref{eq:32b}). Using a smooth, compactly supported cut-off function $0\le\varphi\le1$ with $\varphi(0)=1$, we shall derive an $\varepsilon$--uniform estimate (as $\varepsilon\to0$) of 
$$
  I_{212}^\varepsilon(t,x) 
  :=\int_{{\bf R}^n}e^{it\Phi(\xi, x,t)}\eta_2(\xi)\varphi(\varepsilon\xi)P^{-1/2}(\xi)\psi(P^{1/2}(\xi))d\xi\,.
$$
Note that $|\nabla_\xi\Phi|=|\nabla P^{1/2}(\xi)+\frac{x}{t}|\ge \frac{1}{4}|\nabla P^{1/2}(\xi)|\ge c|\xi|^{m_1-1}$ for $\xi\in \Omega_2$ and $|\partial_\xi^{\alpha}\Phi|\le C_\alpha |\xi|^{m_1-\alpha}$ for $|\alpha|\ge2$. Now we define the operator $L$ by
$$Lf:=\frac{\langle\nabla_\xi\Phi,\nabla_\xi\rangle}{it|\nabla_\xi\Phi|^2}f.
$$
Since $Le^{it\Phi}=e^{it\Phi}$, we obtain by $N$ iterated integrations by parts:
\begin{eqnarray}\label{eq:38b}
|I_{212}^\varepsilon(t,x)|&=&\left|\int_{{\bf R}^n}e^{it\Phi(\xi, x,t)}(L^*)^N\,\left[\varphi(\varepsilon\xi)
\eta_2(\xi)P^{-1/2}(\xi)\psi(P^{1/2}(\xi))\right]d\xi\right|\nonumber \\
&\le& C_N|t|^{-N}\int_{{\rm supp}\psi(P^{1/2})}|\xi
|^{-mN}d\xi\le C'_N|t|^{-N},
\end{eqnarray}
where $N>n$ and $L^*$ is the adjoint operator of $L$.
Combining the estimates \eqref{eq:38a} and \eqref{eq:38b} yields the claimed estimate 
$|I_{21}|\le C|t|^{-n/2}$ for $|t|\ge1$ and $|x|\le |t|$.\\

Together with the estimate \eqref{eq:32} (with $k=\mu-\frac1m$) on $I_{22}$ this yields
\begin{equation} %\label{}
|I_2(t,x)|\leq Ct^{-\frac1m}(1+t^{-1}|x|)^{-\mu}\quad{\rm for}\ t\ge1\ {\rm and}\ |x|\le |t|.
\end{equation}
Thus, combining the cases (i) and (ii),  we conclude
\begin{equation} \label{eq:38}
|I_2(t,x)|\le Ct^{-\frac1m}(1+t^{-1}|x|)^{-\mu}\quad{\rm for}\ t\ge1\ {\rm and}\
x\in{\bf R}^n.
\end{equation}
\vskip0.3cm

\begin{comment}
First we rewrite \eqref{eq:32a} and integrate by parts:
\begin{eqnarray*}
J_\varepsilon(t,x)
%&=&2\int^\infty_0 e^{-\varepsilon s\pm
%its}s^{\frac{2n}{m}-2}\psi(s)\Phi(rs^\frac{2}{
%m},s^2)ds\\
&=&2\int^\infty_0e^{-\varepsilon s\pm its}\Big{(}\int_{{\bf
S}^{n-1}}e^{irs^{^\frac{2}{m}}\phi(s^2,\omega)}\rho^{n-1}(s^2,\omega)\partial_s\rho(s^2,\omega)\psi(s)
d\omega \Big{)}ds\\
&=&\frac{2 (-1)^\ell}{(-\varepsilon \pm it)^\ell}\int^\infty_0e^{-\varepsilon s\pm its}
\Big{(}\int_{{\bf S}^{n-1}} \frac{\partial^\ell}{\partial s^\ell}\big(g(s,\omega)\big)
d\omega \Big{)}ds ;\quad \ell\in {\bf N}_0\,,\\
\end{eqnarray*}
with
$$
g(s,w):=e^{irs^{^\frac{2}{m}}\phi(s^2,\omega)}\rho^{n-1}(s^2,\omega)\partial_s\rho(s^2,\omega)\psi(s).$$
If $r\le1$, then we obtain from \eqref{eq:22}, Lemma \ref{lem:22}:
$$\Big{|}\frac{\partial^\ell}{\partial s^\ell}\Big{(}g(s,\omega)\Big{)}\Big{|}\le C_\ell s^{(-2+\frac{2n}{ m})-\ell(\frac{m-2}{m})}\quad \forall\,\ell\in{\bf N}_0.
$$
%Since $n$-times integrations by parts in $J_\varepsilon$ yield
Choosing $\ell=n$ we obtain
$$
|J_\varepsilon(t,x)|\le C \ \ {\rm for}\  t\geq 1,  \ |x|\leq1, \ {\rm and}\ \varepsilon>0.
$$
In the limit $\varepsilon\rightarrow0$ this yields (together with $|I_{22}(t,x)|\le C_0$):
$$|I_2(t,x)|\le C \ \ {\rm for}\  t\geq 1\ {\rm and}\
|x|\leq1.
$$
Since $t^{-1}|x|\le 1$ in this case, we trivially have
$$
|I_2(t,x)|\le C\le C(1+t^{-1}|x|)^{-\mu} \ \ {\rm for}\ t\ge1\ ,
|x|\le 1.
$$
\end{comment}

%%%%%%%%%%%%%%%%%%%%%%%%%%%%%%%%%%%%%%%%%%%%%%%%%%%%%%%%%%%%%%%%%%%%%%%%%%%%%%%%%%%%%%%%%%%%%5
\vskip0.3cm
\noindent
\underline{ Case (iii):} For $0<t<1$ and $x\in{\bf R}^n$ we shall use a standard scaling argument.\\
We observe that
\begin{eqnarray*}
I_2(t,x)&=&\int_{{\bf R}^n}e^{i(\langle
x,\xi\rangle+tP^{1/2}(\xi))}P^{-1/2}(\xi)d\xi \\
&=&t^{-\frac{n}{m_1}}\int_{{\bf R}^n} e^{i\big(\langle t^{-\frac{1}{
m_1}}x,\xi\rangle+tP^{1/2}(t^{-\frac{1}{
m_1}}\xi)\big)}P^{-1/2}(t^{-1/m_1}\xi)d\xi\\
&=&t^{-{\frac{n}{m_1}+1}}\int_{{\bf R}^n} e^{i\big(\langle t^{-\frac{1}{
m_1}}x,\xi\rangle+(t^2P((t^2)^{-\frac{1}{
m}}\xi))^{1/2}\big)}(t^2P((t^2)^{-\frac{1}{m}}\xi))^{-1/2}d\xi .
\end{eqnarray*}
Let $P_t(\xi):=t^2P(t^{-\frac{2}{m}}\xi)$, $\rho_t(s,\omega):=t^\frac{2}{m}\rho(\frac{s}{t^2},\omega)$, and $\sigma_t(s,\omega):=t^\frac{2}{m}\sigma(\frac{s}{t^2},\omega)$. Then \eqref{eq:21} still holds when $P$,
$\rho$, $\sigma$ are replaced, respectively, by $P_t$, $\rho_t$,
$\sigma_t$. It is easy to check that $\sigma_t$ also satisfies
\eqref{eq:22} with the same constants $C_{kL}$. Hence, we can deduce from \eqref{eq:38} (with
$t=1$) that
\begin{equation} \label{eq:39}
|I_2(t,x)|\le Ct^{-\frac{{n-m_1}}{m_1}}(1+t^{-\frac{1}{
m_1}}|x|)^{-\mu},  %{-\frac{mn-4n+2m}{2(m-2)}}, 
\quad{\rm for}\ t\in(0,1)\ {\rm and}\ x\in{\bf R}^n.
\end{equation}
This completes the proof of the theorem.
\end{proof}

\vskip0.3cm
\begin{rem}  \label{rem:32}
If one checks the details of the proof for
the cases (i) and (ii) above, one finds that the estimate
for $I_2(t=1,x)$ does not use the condition
$n\ge m$. Therefore  the estimate \eqref{eq:39} of $I_2(t,x)$ for $0<t<1$ is also obtained
by scaling without the restriction $n\ge m$.

\end{rem} Similarly to the above proof of
$I_2$,  we obtain the following
result for the oscillatory integral $I_1(x,t)$. 
\vskip0.3cm
\begin{thm}\label{th3.3}
Assume that the polynomial $P$ satisfies %the conditions
{\bf (H1)} and {\bf (H2)}. Then
\begin{equation} \label{eq:310}
|I_1(t,x)|\le
\left\{%}
\begin{array}{ll}
    C|t|^{-\frac{n}{m_1}} (1+|t|^{-\frac{1}{m_1}}|x|)^{-\frac{n(m-4)}{2(m-2)}}, \quad {\rm for}\ 0<|t|\leq1\,, \\
    C|t|^{-\frac{1}{m_1}}(1+|t|^{-1}|x|)^{-\frac{n(m-4)}{2(m-2)}}, \quad {\rm for}\ |t|\ge 1\,.\\
\end{array}
\right.
\end{equation}
%where $m_1:=\frac{m}{2}$.
\end{thm}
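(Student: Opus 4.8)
The plan is to mirror the proof of Theorem \ref{th3.1} almost line by line, exploiting the structural identity between $I_1$ and $I_2$: the integrand of $I_1$ is that of $I_2$ multiplied by $P^{1/2}(\xi)$. Concretely, after the same cut-off decomposition $I_1 = I_{11} + I_{12}$ (with $\psi(P^{1/2})$ and $1-\psi(P^{1/2})$ respectively), the low-frequency piece $I_{12}$ has a $C^\infty_c$ integrand, so the surface-measure argument of \eqref{surface-meas}--\eqref{eq:32} applies verbatim: $P^{-1/2}$ is simply replaced by $P^{1/2}$, which is still smooth and compactly supported on the relevant set, so one still gets $|I_{12}(t,x)|\le C_k|t|^{-1/m}(1+|t|^{-1}|x|)^{-(k+1/m)}$ for $|t|\ge1$. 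Here it is worth recording that the manifold is of type $\le m$ as before, and the extra polynomial factor does not change the order of contact.

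For the high-frequency part $I_{11}$ with $|x|\ge t\ge 1$ (the delicate Case (i)), I would again regularize by $e^{-\varepsilon P^{1/2}}$ and pass to polar then $(\rho,\omega)\to(s,\omega)$ coordinates. The only change is in the amplitude: the factor $P^{-1/2}(\rho\omega) = s^{-1/2}$ is replaced by $P^{1/2}(\rho\omega) = s^{1/2}$, so the new amplitude $b(s,\omega)$ carries an extra power $s$ (equivalently $s^2$ after the $s\mapsto s^2$ substitution), and the functions $v_0, v_\pm, v_-$ from the proof of Theorem \ref{th3.1} each gain a factor $s^{4/m}$ (since the homogeneity shift is $s^{2n/m-2}\to s^{2n/m-2}\cdot s^{4/m}$ in the $\Psi_0$ term, and correspondingly $s^{(n+1)/m-2}\to s^{(n+1)/m-2+4/m}$ in the $\Psi_\pm$ terms). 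Everything else — Lemmata \ref{lem:21}--\ref{lem:23}, the estimates \eqref{eq:25}--\eqref{eq:29} on the phases, the interpolation \eqref{eq:35} for $g^{(k)}$, the Van der Corput estimate for $R_{0\,2}^{-}$ — is unchanged. Propagating the extra $s^{4/m}$ through the integration-by-parts bookkeeping shifts the admissible spatial exponent: one should now choose $l\ge \mu'$ and $j=n$ with $\theta = \mu'/n$, where $\mu' := \mu - \frac{2}{m-2} = \frac{n(m-4)}{2(m-2)}$, and one should verify that the time exponent still beats $-\nu$ (this works because the extra $s^{4/m}$ only improves convergence of the $s$-integral near infinity and the crucial inequality $\mu' - n < -\nu$ still holds for $n\ge m$). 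The outcome is $|I_{11}(t,x)|\le C t^{-\nu}|x|^{-\mu'}$, hence $|I_1(t,x)|\le C|t|^{-1/m_1}(1+|t|^{-1}|x|)^{-\mu'}$ when $|x|\ge t\ge1$, after absorbing powers as in Theorem \ref{th3.1}; note $m_1 = m/2$ so $-1/m_1 = -2/m$, consistent with the claimed exponent.

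For Case (ii), $t\ge1$ and $|x|\le t$, I would repeat the stationary-phase / integration-by-parts dichotomy: the critical set of $\Phi = P^{1/2}(\xi) + \langle x/t,\xi\rangle$ is the same bounded set as before, the Hessian non-degeneracy on $\Omega$ is the same statement (Lemma \ref{lem:53}), and the extra amplitude factor $P^{1/2}$ is bounded with all derivatives bounded on the compact set $\Omega_1$, so the stationary phase principle still gives $|I_{111}(t,x)|\le C|t|^{-n/2}$; on $\Omega_2$ the extra factor $P^{1/2}(\xi)\le c|\xi|^{m_1}$ only costs a few extra powers of $|\xi|$ under the iterated $L^*$, which is harmless since one takes $N>n+m_1/m$ or so, still yielding $|t|^{-N}$ decay. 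Thus $|I_{11}(t,x)|\le C|t|^{-n/2}\le C|t|^{-2/m}(1+t^{-1}|x|)^{-\mu'}$ for $|x|\le t$, and combined with the $I_{12}$ bound one gets the claim for all $x$ when $|t|\ge1$. Finally Case (iii), $0<|t|<1$, follows from the $|t|=1$ estimate by the same scaling substitution $\xi\mapsto t^{-2/m}\xi$ as in the proof of Theorem \ref{th3.1}, picking up the prefactor $t^{-n/m_1}$ (one power of $t^{-n/m_1}$ from the Jacobian-type scaling and, this time, \emph{no} compensating power, since $I_1$ has no $P^{-1/2}$ factor — this is precisely why the small-time exponent is $-n/m_1$ rather than $-(n-m_1)/m_1$); Remark \ref{rem:32} applies, so no restriction $n\ge m$ is needed for the small-time part. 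The main obstacle I anticipate is the careful exponent bookkeeping in Case (i): one must check that after inserting the extra $s^{4/m}$ the choice $\theta=\mu'/n$ still keeps all the $s$-integrals convergent at infinity \emph{and} produces a time exponent $\le -\nu$, i.e. that the two competing requirements (enough spatial decay, enough temporal decay) remain simultaneously satisfiable for every $n\ge m$ — this is the same tension as in Theorem \ref{th3.1}, just with shifted constants, and should go through, but it is the step most likely to need a sign/inequality check.
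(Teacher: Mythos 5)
Your overall strategy is exactly the paper's: the paper disposes of Theorem \ref{th3.3} with the one\-/line remark that one argues ``similarly to the above proof of $I_2$''. However, the exponent bookkeeping in your Case (i) --- which you yourself flag as the risky step --- does not close as written. First, the amplitude change is misidentified. The integrand of $I_1$ is that of $I_2$ times $P^{1/2}$, so in the $(s,\omega)$ variables the factor $P^{-1/2}(\rho\omega)=s^{-1/2}$ is replaced by $1$, not by $s^{1/2}$ (that would be the integral with amplitude $P^{1/2}$, i.e.\ an extra factor $P=s$); after the substitution $s\mapsto s^2$ the net effect is one extra factor $s^{1}$, so that $v_0=s^{\frac{2n}{m}-1}\psi\,\Psi_0$ and $v_{\pm}=s^{\frac{n+1}{m}-1}\psi\,\Psi_{\pm}$. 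Your three candidate values for the extra factor ($s$, $s^2$, $s^{4/m}$) are mutually inconsistent, and none equals the correct $s^{1}$ in the squared variable. Second, $\mu-\frac{2}{m-2}\neq\frac{n(m-4)}{2(m-2)}$; the correct spatial exponent is $\mu_1=\mu-\frac{m}{m-2}=\frac{n(m-4)}{2(m-2)}$, and --- crucially --- the temporal exponent must simultaneously improve from $\nu$ to $\nu_1=\nu+\frac{m}{m-2}=\frac{n}{m-2}$, so that $\nu_1+\mu_1=\frac n2$ is preserved. Your target bound $|I_{11}|\le Ct^{-\nu}r^{-\mu_1}$ is genuinely too weak to yield the theorem: for $r\ge t\ge1$ it only gives $t^{-\nu-\mu_1}(r/t)^{-\mu_1}$ with $\nu+\mu_1=\frac n2-\frac{m}{m-2}$, which for $m=n=4$ equals $0$, while the theorem claims decay $t^{-1/2}$ there. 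The repair is mechanical: with the corrected $v_{\pm}$ and the choice $\theta=\mu_1/n\in[0,1]$, the analogue of \eqref{eq:37} gives $|D^n_\ast v_{\mbox{\tiny +}}|\le Ct^{\mu_1-n}r^{-\mu_1}s^{E}$ with $E=\frac{n(m-4)}{2m}+\frac{n+1}{m}-1-n<-1$, and since $\mu_1-n\le-\nu_1$ (equivalent to $\frac n2\le n$) one obtains $|R^{\mbox{\tiny +}}_\varepsilon|\le Ct^{-\nu_1}r^{-\mu_1}=Ct^{-n/2}(r/t)^{-\mu_1}$, which does convert to $Ct^{-1/m_1}(1+t^{-1}|x|)^{-\mu_1}$; the same correction is needed for $R^0_\varepsilon$ and the three pieces of $R^{\mbox{-}}_\varepsilon$.

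Two smaller points. Your Cases (ii) and (iii) are fine: on $\Omega_1$ and $\Omega_2$ the amplitude is just $\eta_j\,\psi(P^{1/2})$ (there is no $P^{\pm1/2}$ factor to control), and the scaling argument correctly produces the prefactor $t^{-n/m_1}$ with no compensating power. But note that Theorem \ref{th3.3} is stated \emph{without} the hypothesis $n\ge m$, whereas you only verify your inequalities ``for every $n\ge m$''. The reason the hypothesis can be dropped is that the sign conditions which forced $n\ge m$ in Theorem \ref{th3.1} (namely $\nu=\frac{n-m}{m-2}\ge0$) are replaced here by $\nu_1=\frac{n}{m-2}>0$ and $\mu_1\ge0$, which hold unconditionally for $m\ge4$; a complete write-up should make this explicit rather than inherit the restriction.
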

Note that $I_1$ has the same structure as the oscillatory integral $I(t,x)$ in \cite{KAY} for 
higher order Schr\"odinger equations, when replacing $P^{1/2}(\xi)$ from \eqref{eq:14} by $P(\xi)$. Thus, Theorem \ref{th3.3} is closely related to Theorem 3.1 of \cite{KAY} (when replacing $m_1$ by $m$). This similarity  is also easily seen on the level of the considered evolution equations: The differential operator of our wave-type equation can be factored as
$$
  \partial_{tt}+P(D_x)=[\partial_t+i\sqrt P (D_x)]\,[\partial_t-i\sqrt P (D_x)]\,,
$$
where each squared bracket corresponds to a time-dependent Schr\"odinger equation.

%%%%%%%%%%%%%%%%%%%%%%%%%%%%%%%%%%%%%%%%%%%%%%%%%%%%%%%%%%%%%%%%%%%%%%%%%%%%%%%%%%%%%%%%%%%%%%%
%%%%%%%%%%%%%%%%%%%%%%%%%%%%%%%%%%%%%%%%%%%%%%%%%%%%%%%%%%%%%%%%%%%%%%%%%%%%%%%%%%%%%%%%%%%%%%%
\section{Decay/growth estimates for wave-type equations}\label{S4}

Here we shall apply the Theorems \ref{th3.1}, \ref{th3.3} to establish $L^p-L^q$ estimates for the solution of the
 following higher order  wave-type equation:$$
\left\{%}
\begin{array}{ll}
 \partial_{tt}u(t,x)+P(-i\nabla)u(t,x)=0,\quad(t,x)\in {\bf R}\times {\bf R}^n,\\
u(0,x)=u_0(x),\;\partial_tu(0,x)=u_1(x),\quad x\in {\bf R}^n.
\end{array}
\right.
$$
As in \eqref{eq:13}, its solution is given by
$$
u(t,x)={\F}^{-1}\cos \left(P^{1/2}(\xi)t\right)\,{\F}u_0
+{\F}^{-1} \frac {\sin \left(P^{1/2}(\xi)t\right)}{P^{1/2}(\xi)}\,{\F}u_1=:U(t,x)+V(t,x).
$$
For any $a\in {\bf R}$ we define the following set of admissible index pairs.
$$
\triangle_a:=\{(p,q);\ \mbox{$(\frac{1}{p},\frac{1}{q})$}\ {\rm lies\
in\ the\ closed\ quadrangle}\ ABCD\},
$$
where $A=(\frac{1}{2},\frac{1}{2})$, $B=(1,\frac{1}{q_a})$, $C=(1,0)$,
and $D=(\frac{1}{q_a'},0)$ for $q_a:=\frac{n}{\mu_a}$,
$\mu_a:=\frac{mn-4n+2a}{2(m-2)}$, and $\frac{1}{q}+\frac{1}{q'}=1$.
Moreover, we denote the Lorentz space by  $L^{p,q}({\bf
R}^n)$ (see p.48 in \cite{G}). \vskip0.4cm

%%%%%%%%%%%%%%%%%%%%%%%%%%%%%%%%%%%%%%%%%%%%%%%%%%%%
\begin{center}
\begin{pspicture}
(-1,-0.8)(11,5)
\psline{->}(0,0)(9,0) \rput(9.5,0){$\frac{1}{p}$}
\psline{->}(0,0)(0,4.5) \rput(-0.5,4.5){$\frac{1}{q}$}
\psline(-0.1,3)(0.1,3) \rput(-0.5,3){$\frac12$}
\psline(3,-0.1)(3,0.1) \rput(3,-0.5){$\frac12$}
\psline(6,-0.1)(6,0.1) \rput(6,-0.5){$1$}
\pscircle*(3,3){0.1} \rput(3,3.5){$A$} % A
\pscircle*(6,2){0.1} \rput(6,2.5){$B$} % B
\pscircle*(6,0){0.1} \rput(6.3,0.5){$C$} % C
\pscircle*(4,0){0.1} \rput(4.2,0.5){$D$} % D
\pscircle*[linecolor=red](5,3){0.1} \rput(5,3.5){$E$} % E
\pscircle*[linecolor=red](3,1){0.1} \rput(2.7,1){$F$} % F
\psline(3,3)(6,2) % AB
\psline(6,2)(6,0) % BC
\psline(6,0)(4,0) % CD
\psline(4,0)(3,3) % DA
\psline[linecolor=red,linestyle=dashed](3,3)(5,3) % AE
\psline[linecolor=red,linestyle=dashed](5,3)(3,1) % EF
\psline[linecolor=red,linestyle=dashed](3,1)(3,3) % FA
\psline[linecolor=red,linestyle=dotted](5,3)(6,2) % EB
\psline[linecolor=red,linestyle=dotted](3,1)(4,0) % FD
\pscircle*(3,3){0.1}
\psline[linestyle=dotted](3,3)(6,0) % DA
%\pscurve[linecolor=red,linestyle=dashed](4,-6)(4.7,-6.1)(4.8,-6.12)(4.9,-6.1)(5.6,-6)(7.6,-4.8)(9.6,0)
%\psarc[linewidth=0.3pt](0,0){4.5}{-135}{0}
%\psarc{->}(0,0){4.5}{-27.26604}{-15}
\end{pspicture}
\emph{Solid line: admissible index pairs for Th.\ \ref{th4.1}: $(\frac1p,\frac1q)$ with $(p,q)\in \triangle_m$. Dashed line: admissible index pairs for Th.\ \ref{th4.1a}.
Dotted line: admissible index pairs from Rem.\ \ref{rem:44}(2).
Figure shows the example $m=4$, $n=6$.}
\end{center}
%%%%%%%%%%%%%%%%%%%%%%%%%%%%%%%%%%%%%%%%%%%%%%%%%%%%5

\begin{thm}\label{th4.1}
Assume that the polynomial $P$ satisfies the conditions
{\bf (H1)} and {\bf (H2)}, and let $n\ge m$ (and hence $2\le q_m<\infty$).  Then we have
\begin{equation} \label{eq:41}
\|V(t,\cdot)\|_{L_*^q}\le
\left\{%}
\begin{array}{ll}
   C|t|^{\frac{n}{m_1}(\frac{1}{q}-\frac{1}{p})+1}\|u_1\|_{L_*^p}, \quad\,\,\,\,\,  {\rm for} \ 0<|t|\leq1, \\
   C|t|^{n|\frac{1}{q}-\frac{1}{p'}|-\frac1m}\|u_1\|_{L_*^p}, \qquad  {\rm for} \ |t|\ge 1, \\
%   C|t|^{n(\frac{1}{q}-\frac{1}{p'})}\|u_1\|_{L_*^p}, \quad  {\rm for} \ |t|\ge 1, \\
%  In the old version the exponent was NOT invariant under duality. So the duality argument
% (at the end) was wrong. Probably same mistake in \cite{KAY}.
\end{array}
\right.
\end{equation}
where $(p,q)\in\triangle_{m}$, $m_1:=m/2$. Here,
the pair of spaces $(L_*^p, L_*^q)$ has the following meaning:
\begin{equation}\label{eq:41a}
(L_*^p, L_*^q)=
\left\{%}
\begin{array}{ll}
  ( L^1, L^{q_m,\infty}),\quad  {\rm if} \ \ (p,q)=(1,q_m),\\
(L^{q_m',1}, L^{\infty}),\quad  {\rm if} \ \ (p,q)=
(q_m',\infty),\\
  ( L^p, L^q),\quad  \quad \ \    \quad{\rm otherwise}. \\
\end{array}
\right.
\end{equation}
\end{thm}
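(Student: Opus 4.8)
The plan is to derive the $L^p$--$L^q$ bounds for $V(t,\cdot)={\mathcal F}^{-1}\frac{\sin(P^{1/2}(\xi)t)}{P^{1/2}(\xi)}{\mathcal F}u_1$ from the pointwise estimates on the kernel $I_2(t,x)$ established in Theorem~\ref{th3.1}, combined with standard interpolation machinery. First I would note that $V(t,x)=(K_2(t,\cdot)\ast u_1)(x)$ where $K_2(t,\cdot)$ is (up to constants) the function whose Fourier transform is $\frac{\sin(P^{1/2}(\xi)t)}{P^{1/2}(\xi)}$; writing $\sin = (e^{i\cdot}-e^{-i\cdot})/(2i)$ expresses $K_2$ as a combination of the two oscillatory integrals $I_2(t,x)$ with the $\pm$ signs, so Theorem~\ref{th3.1} gives $|K_2(t,x)|\le C|t|^{-\frac{n-m_1}{m_1}}(1+|t|^{-1/m_1}|x|)^{-\mu}$ for $0<|t|\le1$ and $|K_2(t,x)|\le C|t|^{-1/m}(1+|t|^{-1}|x|)^{-\mu}$ for $|t|\ge1$.

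The two endpoints of the quadrangle are: (a) the $L^2\to L^2$ bound at the vertex $A=(\tfrac12,\tfrac12)$, which follows directly from Plancherel since $\big|\frac{\sin(P^{1/2}t)}{P^{1/2}}\big|\le C\min(|t|,P^{-1/2})\le C|t|$ (and in fact is bounded by $Ct^{\cdots}$ after accounting for the large-$\xi$ decay); and (b) the $L^1\to L^{q_m,\infty}$ bound at the vertex $B=(1,\tfrac1{q_m})$, which comes from the weak-type bound on convolution with the kernel $K_2(t,\cdot)$. The key computation is that $K_2(t,\cdot)\in L^{q_m,\infty}({\bf R}^n)$: from the pointwise decay $|K_2(t,x)|\le C|t|^{-1/m}(1+|t|^{-1}|x|)^{-\mu}$ one checks that the distribution function $|\{x: |K_2(t,x)|>\lambda\}|$ behaves like $\lambda^{-q_m}$ with $q_m=n/\mu$, so $\|K_2(t,\cdot)\|_{L^{q_m,\infty}}\le C|t|^{-1/m+n(1-1/q_m)}=C|t|^{n/q_m'-1/m}$, where I have used $\mu = \frac{mn-4n+2m}{2(m-2)}$ and $q_m=n/\mu$; the precise time exponent should match $n|\frac1q-\frac1{p'}|-\frac1m$ at $(p,q)=(1,q_m)$. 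Young's inequality in Lorentz spaces (the convolution inequality $L^1\ast L^{q_m,\infty}\hookrightarrow L^{q_m,\infty}$) then gives the endpoint at $B$; the endpoint at $D=(\frac1{q_m'},0)$ is dual. Once these endpoints and the $L^2\to L^2$ estimate are in hand, real interpolation (the Riesz--Thorin / Marcinkiewicz theorem, or rather Stein--Weiss interpolation for Lorentz spaces) fills in the whole closed quadrangle $ABCD$, with the time exponent depending affinely on $(\frac1p,\frac1q)$ — this is why the stated exponent is $n|\frac1q-\frac1{p'}|-\frac1m$ for $|t|\ge1$ and $\frac{n}{m_1}(\frac1q-\frac1p)+1$ for $0<|t|\le1$, both reducing to the correct values at the vertices. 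The small-time case $0<|t|\le1$ is handled identically using the first line of \eqref{eq:31}, which produces the exponent $\frac{n}{m_1}(\frac1q-\frac1p)+1$ after the analogous $L^{q_m,\infty}$-norm computation (now with scaling weight $|t|^{-1/m_1}$).

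The main obstacle I anticipate is the bookkeeping at the two ``bad'' vertices $B$ and $D$, where genuine weak-type estimates are needed: one cannot avoid Lorentz spaces there because $K_2(t,\cdot)$ just barely fails to be in $L^{q_m}$, and the interpolation must be set up carefully (using the off-diagonal Marcinkiewicz interpolation theorem between the strong $L^2\to L^2$ estimate and the weak $L^1\to L^{q_m,\infty}$ estimate) to recover strong-type bounds $L^p\to L^q$ in the open part of the quadrangle while retaining only the weak/restricted forms exactly at $B$ and $D$, which explains the definition of $(L_*^p,L_*^q)$ in \eqref{eq:41a}. A secondary technical point is verifying that the time-dependent constants in Young's and the interpolation inequalities combine to give exactly the claimed powers of $|t|$; this is routine but requires tracking the scaling carefully, and is most cleanly done by first checking the exponent is affine in $(\frac1p,\frac1q)$ and then matching it at the three defining vertices $A$, $B$, $D$.
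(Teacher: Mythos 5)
Your proposal follows essentially the same route as the paper: Plancherel at the vertex $A$, the pointwise bound on $I_2$ from Theorem~\ref{th3.1} combined with the weak Young inequality to get the $L^1\to L^{q_m,\infty}$ endpoint on the edge $\overline{CB}$, Marcinkiewicz interpolation to fill the triangle $ABC$, duality for $ADC$, and the Lorentz-space bookkeeping at $B$ and $D$ (the paper uses $L^{q_m',1}\subset(L^{q_m,\infty})^*$ at $D$). The only blemish is a small arithmetic slip in your weak-norm computation, where the correct exponent is $n/q_m-\frac1m=\mu-\frac1m$ rather than $n/q_m'-\frac1m$; this does not affect the argument since, as you note, the exponent is affine in $(\frac1p,\frac1q)$ and must match the theorem's formula at the vertices.
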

\begin{proof} By the assumption \textbf{(H1)}, one has
$$
\left|\frac {\sin \left(P^{1/2}(\xi)t\right)}{P^{1/2}(\xi)}\right|\le
\left\{%}
\begin{array}{ll}
  |t|, \quad  {\rm for} \ 0<|t|\leq1, \\
  C, \quad {\rm for} \ |t|\ge 1. \\
\end{array}
\right.
$$
Then, the Plancherel theorem gives the result for the index point $A$ :
\begin{equation} \label{eq:42}
\|V(t,\cdot)\|_{L^2}\le
\left\{%}
\begin{array}{ll}
  |t|\|u_1\|_{L^2}, \quad {\rm for} \ 0<|t|\leq1, \\
   C\|u_1\|_{L^2}, \quad {\rm for} \ |t|\ge 1. \\
\end{array}
\right.
\end{equation}
On the other hand,  by Theorem \ref{th3.1} we have  for each $t\neq0$:
$I_2(t,\cdot)\in L^q({\bf R}^n) $ $\forall$ $q>q_{m}$ and $I_2(t,\cdot)\in
L^{q_m,\infty}({\bf R}^n)$ (the weak $L^{q_m}$ space). Applying the (weak) Young inequality
(see p.22 in \cite{G}) to the second term of \eqref{eq:13} then implies
\begin{equation} \label{eq:43}
\|V(t,\cdot)\|_{L_*^q}\le
\left\{%}
\begin{array}{ll}
   C|t|^{\frac{n}{m_1}(\frac{1}{q}-1)+1}\|u_1\|_{L^1}, \quad\, {\rm for} \ 0<|t|\leq1, \\
   C|t|^{\frac{n}{q}-\frac1m}\|u_1\|_{L^1}, \quad\quad\quad \,\,\, {\rm for} \ |t|\ge 1. \\
\end{array}
\right.
\end{equation}
This proves the estimate for the points $(1,\frac{1}{q})$ on the edge $\overline{CB}$.
Applying the Marcinkiewicz
interpolation theorem (see p.56 in \cite{G}) to \eqref{eq:42} and \eqref{eq:43} proves
\eqref{eq:41} for the points in the closed triangle $ABC$. By duality, the
estimate for the triangle $ADC$ follows immediately from
the result in the triangle $ABC$ (note that the adjoint operator of $I_2*u_1$ has the same structure).
To include the result for the index point $D$, we remark that $L^{q_m',1}\subset(L^{q_m,\infty})^*$
(cf.\ \cite{Cw}).
This completes the proof of the theorem.
\end{proof}

%%%%%%%%%%%%%%%%%%%%%%%%%%%%%%%%%%%%%%%%%%%%%%%%%%%%%%%%%%%%%%%%%%%%%%%%%%%%%
% new part by Anton

Next we shall complement this result with a straight forward estimation of
$$
V(t,x)={\F}^{-1} Q(t,\xi)\,{\F}u_1\,,\qquad
Q(t,\xi):=\frac {\sin \left(P^{1/2}(\xi)t\right)}{P^{1/2}(\xi)}\,.
$$
To this end we define the index points $E=(\frac{n+m}{2n},\frac12)$, $F=(\frac12,\frac{n-m}{2n})$.
\begin{thm}\label{th4.1a}
Let the polynomial $P$ satisfy
{\bf (H1)}, and let $n\ge m$.  Then we have
\begin{equation} \label{eq:41b}
\|V(t,\cdot)\|_{L_*^q}\le
\left\{%}
\begin{array}{ll}
   C|t|^{\frac{n}{m_1}(\frac{1}{q}-\frac{1}{p})+1}\|u_1\|_{L^p}, \quad\,\,  {\rm for} \ 0<|t|\leq1, \\
   C \|u_1\|_{L^p}, \qquad\qquad\qquad\quad  {\rm for} \ |t|\ge 1, \\
\end{array}
\right.
\end{equation}
where $(\frac1p,\frac1q)$ lies in the closed triangle $AEF$ and $m_1:=m/2$.
Here, we denote $L_*^q:=L^{q,\infty}$ if $\frac1q=\frac1p-\frac{m}{2n}$.
And $L_*^q:=L^q$, elsewise.
\end{thm}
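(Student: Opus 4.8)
The plan is to prove Theorem \ref{th4.1a} by a direct Fourier-multiplier analysis of $V(t,x)=\F^{-1}Q(t,\xi)\F u_1$, exploiting the fact that the multiplier $Q(t,\xi)=\sin(tP^{1/2}(\xi))/P^{1/2}(\xi)$ is globally bounded and decays like $P^{-1/2}(\xi)\sim|\xi|^{-m_1}$ for large $|\xi|$. First I would record the two elementary a priori estimates at the endpoint $A=(\tfrac12,\tfrac12)$: since $|Q(t,\xi)|\le\min(|t|,C\langle\xi\rangle^{-m_1})$ (using $|\sin\tau|\le|\tau|$ for the short-time bound and ellipticity of $P$ for the decay), the Plancherel theorem gives $\|V(t,\cdot)\|_{L^2}\le|t|\,\|u_1\|_{L^2}$ for $0<|t|\le1$ and $\|V(t,\cdot)\|_{L^2}\le C\|u_1\|_{L^2}$ for $|t|\ge1$. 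This settles the vertex $A$ for all $t$.

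Next I would establish the estimates at the other two vertices $E=(\tfrac{n+m}{2n},\tfrac12)$ and $F=(\tfrac12,\tfrac{n-m}{2n})$. The key observation is that $Q(t,\cdot)$, viewed as a convolution kernel $K_t:=\F^{-1}Q(t,\cdot)$, is essentially a fractional integration kernel of order $m_1$: for $|t|\ge1$ the symbol $Q(t,\xi)$ behaves like $P^{-1/2}(\xi)$ near infinity (times a bounded oscillatory factor) and is bounded near the origin, so by the Hardy--Littlewood--Sobolev / fractional integration theorem (the multiplier $|\xi|^{-m_1}$ maps $L^p\to L^q$ when $\tfrac1q=\tfrac1p-\tfrac{m_1}{n}=\tfrac1p-\tfrac{m}{2n}$, with the weak-type endpoint $L^q=L^{q,\infty}$ in the borderline case) one obtains $\|V(t,\cdot)\|_{L_*^q}\le C\|u_1\|_{L^p}$ uniformly for $|t|\ge1$ on the segment $\tfrac1q=\tfrac1p-\tfrac{m}{2n}$, which connects $E$ to $F$. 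For $0<|t|\le1$ I would instead use the scaling $\xi\mapsto t^{2/m}\xi$ (exactly as in Case (iii) of the proof of Theorem \ref{th3.1}, replacing $P$ by $P_t$) to reduce the short-time kernel to the $t=1$ kernel; since $\sigma_t$ satisfies \eqref{eq:22} with the same constants, the $t=1$ mapping property transfers and produces the factor $|t|^{\frac{n}{m_1}(\frac1q-\frac1p)+1}$ (the exponent being exactly what the scaling dictates together with the extra $t^{+1}$ coming from the $P^{-1/2}$ prefactor in $V$, cf.\ the second displayed line in the Case (iii) computation). For the vertices $E$ and $F$ separately (not on the HLS line) one uses the $L^2\to L^2$ boundedness together with $L^1$-kernel or $L^\infty$-type bounds: at $E$ one has $p=\tfrac{2n}{n+m}$, $q=2$, so $\tfrac1q=\tfrac1p-\tfrac{m}{2n}$ actually holds — $E$ and $F$ both lie on that critical line — so the HLS argument covers them directly, and only the strict interior of $AEF$ needs interpolation.

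Finally I would fill in the interior of the triangle $AEF$ by interpolation. For the short-time regime $0<|t|\le1$ one interpolates (Riesz--Thorin, or the real interpolation method if one wants to keep track of the Lorentz endpoints) between the $L^2\to L^2$ bound at $A$ and the bounds along the segment $\overline{EF}$; the time-powers interpolate affinely, and a short computation confirms that the resulting exponent is the advertised $\frac{n}{m_1}(\tfrac1q-\tfrac1p)+1$, which is correct since it is an affine function of $(\tfrac1p,\tfrac1q)$ agreeing with the vertex values. For $|t|\ge1$ the bound is simply $C\|u_1\|_{L^p}$ at every vertex, hence on all of $AEF$ by interpolation, with the Lorentz refinement $L_*^q=L^{q,\infty}$ precisely on the critical line $\tfrac1q=\tfrac1p-\tfrac{m}{2n}$ and $L^q$ off it (the strong-type bound being available in the open region and failing only at the HLS endpoint).

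The main obstacle I anticipate is the careful handling of the multiplier $Q(t,\xi)$ near $|\xi|\to\infty$: it is not simply $|\xi|^{-m_1}$ but carries the oscillating factor $\sin(tP^{1/2}(\xi))$, so one must either argue that this bounded factor does not destroy the $L^p\to L^q$ mapping (true, since one can write $K_t=\F^{-1}[\chi(\xi)P^{-1/2}] + \text{smoothing}$ and the oscillation is harmless for the fractional-integration estimate, being absorbed into the Hörmander–Mikhlin part for the low-frequency piece and into the $P^{-1/2}$ decay for the high-frequency piece), or invoke Theorem \ref{th3.1}/\ref{th3.3} pointwise bounds on $I_2$ if one prefers a kernel estimate — but the cleaner route for this ``straightforward'' complement is the multiplier argument, and reconciling the endpoint Lorentz spaces with the weak-type HLS inequality is the one place where some care is genuinely needed.
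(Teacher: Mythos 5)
Your skeleton (vertex $A$ by Plancherel, a $t$-uniform bound on the segment $\overline{EF}$ lying on the line $\tfrac1q=\tfrac1p-\tfrac{m}{2n}$, then real interpolation) matches the paper's, and your verification that $E$ and $F$ both sit on that line is correct. But the step you yourself flag as delicate is in fact a genuine gap: you obtain the edge bound by invoking Hardy--Littlewood--Sobolev for the multiplier $|\xi|^{-m_1}$ and then arguing that the actual symbol $Q(t,\xi)=\sin(tP^{1/2}(\xi))P^{-1/2}(\xi)$ is pointwise dominated by $C|\xi|^{-m_1}$. Fourier multiplier norms are not monotone under pointwise domination of symbols, so this does not follow; and your proposed repairs do not close it. H\"ormander--Mikhlin only yields diagonal $L^p\to L^p$ bounds and, worse, requires derivative bounds on the symbol, whereas $\partial_\xi^\alpha\sin(tP^{1/2}(\xi))$ grows in $t$, so no $t$-uniform multiplier estimate comes out of that route. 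Likewise, writing $K_t=\F^{-1}[\chi P^{-1/2}]+\text{smoothing}$ discards the $t$-dependent oscillation rather than controlling it: the high-frequency part of $Q(t,\cdot)$ is not a perturbation of $\chi P^{-1/2}$ in any norm that feeds into HLS.

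The mechanism that actually lets one use only the pointwise size of the symbol is the one the paper employs, and it hinges on the fact that every $(\tfrac1p,\tfrac1q)$ in $AEF$ satisfies $p\le2\le q$: by Hausdorff--Young, $\F u_1\in L^{p'}$; the pointwise bound $|Q(t,\xi)|\le C|\xi|^{-m_1}$ gives $Q(t,\cdot)\in L^{2n/m,\infty}$ (uniformly in $t$); H\"older's inequality for Lorentz spaces gives $Q(t,\cdot)\F u_1\in L^{\tilde p,\infty}$ with $\tfrac1{\tilde p}=\tfrac1{p'}+\tfrac{m}{2n}$; and the Hausdorff--Young inequality for Lorentz spaces sends this back to $V(t,\cdot)\in L^{\tilde p',\infty}=L^{q,\infty}$. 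This chain is legitimately monotone under pointwise domination of the symbol, so the oscillating factor really is harmless here, and it yields $\|V(t,\cdot)\|_{L^{q,\infty}}\le C\|u_1\|_{L^p}$ for \emph{all} $t$ on $\overline{EF}$. Your separate small-time scaling argument is then superfluous: Marcinkiewicz interpolation between this $t$-uniform edge bound and the vertex-$A$ bound \eqref{eq:42} (which carries the factor $|t|$ for $0<|t|\le1$) already produces the exponent $\tfrac{n}{m_1}(\tfrac1q-\tfrac1p)+1$ in \eqref{eq:41b}, as it is the affine interpolant of the values $1$ at $A$ and $0$ on $\overline{EF}$.
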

\begin{proof} By the assumption \textbf{(H1)}, we have $|Q(t,\xi)|\le C|\xi|^{-m_1}$. And hence,
%$Q(t,\cdot)\in L^r({\bf R}^n) $ $\forall$ $\frac{2n}{m}<r\le\infty$ and
$Q(t,\cdot)\in L^{\frac{2n}{m},\infty}({\bf R}^n)$.
Since we assumed $u_1\in L^p$ for some $\frac12\le\frac1p\le \frac{n+m}{2n}$, we have
${\F}u_1\in L^{p'}$. And the H\"older inequality for Lorentz spaces (cf.\ \cite{G}) implies
$$
  Q(t,\xi)\,{\F}u_1 \in L^{\tilde p,\infty}\,,\qquad \tilde p:=\frac{p}{p-1+\frac{pm}{2n}}\,.
$$
The Hausdorff-Young inequality for Lorentz spaces (cf.\ \cite{MP}) then yields the result for the edge
$\overline{EF}$ with $\frac1q=\frac1p-\frac{m}{2n}$:
$$
  \|V(t,\cdot)\|_{L^{q,\infty}}\le C \|u_1\|_{L^p}\,,\quad\forall\,t\in{\bf R}\,.
$$
Applying the Marcinkiewicz interpolation theorem (with \eqref{eq:42}) concludes the proof.
\end{proof}

\begin{rem} \label{rem:44}
\begin{enumerate}
\item The short time behavior of $u$ in Th.\ \ref{th4.1} and Th.\ \ref{th4.1a} coincides for the indices in $AEF \cap ABCD$. But for large time, the r.h.s.\ of \eqref{eq:41b} stays uniformly bounded, which is not always the case in \eqref{eq:41}.
\item A Marcinkiewicz interpolation between the edges $\overline{EF}$ and $\overline{BC}$ (plus a duality argument for $\frac1q<\frac1{p'}$) allows to extend the decay/growth estimate on $u$ to the closed hexagon $AEBCDF$. But since this follows exactly the above strategy, we do not give details here.
\item Theorem \ref{th4.1a} actually also holds for $n<m$, but we skipped the statement for notational simplicity.
If $m\in(n,2n)$ one obtains a decay/growth estimate for the index pair $(\frac1p,\frac1q)$ in the closed pentagon described by the five different endpoints: {$(\frac12,\frac12),\,(1,\frac12),\,(1,\frac{n-m_1}{n}),\,(\frac{m_1}{n},0), \,(\frac12,0)$.} And for $m\ge2n$ for the whole index square $\frac12\le \frac1p\le 1$, $0\le\frac1q\le\frac12$.\\
\end{enumerate}
\end{rem}
% new part by Anton
%%%%%%%%%%%%%%%%%%%%%%%%%%%%%%%%%%%%%%%%%%%%%%%%%%%%%%%%%%%%%%%%%%%%%%%%%%%%%

\noindent
Now we turn to the estimate of $U$:
\begin{thm}\label{th4.2}
Assume that the polynomial $P$ satisfies %the conditions
{\bf (H1)} and {\bf (H2)}.  Then we have
\begin{equation} \label{eq:44}
\|U(t,x)\|_{L_*^q}\le
\left\{%}
\begin{array}{ll}
   C|t|^{\frac{n}{m_1}(\frac{1}{q}-\frac{1}{p})}\|u_0\|_{L_*^p}, \quad\quad\, {\rm for} \ 0<|t|\leq1, \\
   C|t|^{n|\frac{1}{q}-\frac{1}{p'}|-\frac{1}{m_1}}\|u_0\|_{L_*^p}, \quad  {\rm for} \ |t|\ge 1,\\
%   C|t|^{n(\frac{1}{q}-\frac{1}{p'})}\|u_0\|_{L_*^p}, \quad  {\rm for} \ |t|\ge 1,\\
%  In the old version the exponent was NOT invariant under duality. So the duality argument
% (at the end) was wrong. Probably same mistake in \cite{KAY}.
\end{array}
\right.
\end{equation}
 where $(p,q)\in\triangle_{0}$ and
$(L_*^p, L_*^q)$ is defined in \eqref{eq:41a} (when replacing $q_m$ by $q_0$).
%$$(L_*^p, L_*^q)=
%\left\{%}
%\begin{array}{ll}
%   (L^1, L^{q_0,\infty}),\quad  {\rm if} \ \ (p,q)=(1,q_0),\\
%(L^{q_0',1}, L^{\infty}),\quad  {\rm if} \ \ (p,q)=
%(q_0',\infty),\\
%  ( L^p, L^q),\quad  \quad \ \  \quad{\rm otherwise}. \\
%\end{array}
%\right.
%$$
\end{thm}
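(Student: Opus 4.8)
The plan is to follow the scheme of the proof of Theorem~\ref{th4.1} line by line, with $I_2$ and Theorem~\ref{th3.1} replaced by $I_1$ and Theorem~\ref{th3.3}, and with the multiplier $\sin(P^{1/2}(\xi)t)/P^{1/2}(\xi)$ replaced by $\cos(P^{1/2}(\xi)t)$. By \eqref{eq:13} one has $U(t,\cdot)=K_t\ast u_0$ up to a fixed constant, where $K_t:={\mathcal F}^{-1}\cos(P^{1/2}(\cdot)t)=\tfrac12\big({\mathcal F}^{-1}e^{itP^{1/2}}+{\mathcal F}^{-1}e^{-itP^{1/2}}\big)$ is, up to constants, the half-sum of the two oscillatory integrals $I_1(t,\cdot)$ from \eqref{eq:14}; in particular $|K_t(x)|$ is bounded by the right-hand side of \eqref{eq:310} (Theorem~\ref{th3.3}). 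Thus the whole statement reduces to $L^p$--$L^q$ mapping properties of convolution with the single kernel $K_t$; and since the multiplier $\cos(P^{1/2}(\xi)t)$ is real-valued, this operator is self-adjoint, so once the triangle $ABC$ is done the triangle $ACD$ will follow by duality (exactly as remarked in the proof of Theorem~\ref{th4.1}).

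First I would settle the vertex $A=(\tfrac12,\tfrac12)$: since $|\cos(P^{1/2}(\xi)t)|\le1$ by \textbf{(H1)}, Plancherel's theorem gives $\|U(t,\cdot)\|_{L^2}\le C\|u_0\|_{L^2}$ for every $t\ne0$, which is \eqref{eq:44} at $A$ (note that here, unlike for the $V$--term, no positive power of $|t|$ is needed for $0<|t|\le1$, because the $O(|t|)$ factor $\sin(P^{1/2}t)/P^{1/2}$ is replaced by the bounded $\cos(P^{1/2}t)$). Next I would treat the edge $\overline{CB}$, i.e.\ the points $(1,\tfrac1q)$ with $q_0\le q\le\infty$. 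Writing $\mu_0:=\frac{n(m-4)}{2(m-2)}$ and $q_0=n/\mu_0$, Theorem~\ref{th3.3} together with the substitutions $y=|t|^{-1}x$ (for $|t|\ge1$) and $y=|t|^{-1/m_1}x$ (for $0<|t|\le1$) in $\|K_t\|_{L^q}^q$, using that $(1+|y|)^{-\mu_0}\in L^q({\bf R}^n)$ precisely for $q>q_0$, yields
\[
  \|K_t\|_{L^q({\bf R}^n)}\le
  \left\{\begin{array}{ll}
     C|t|^{\frac{n}{m_1}(\frac1q-1)},& 0<|t|\le1,\\[1mm]
     C|t|^{\frac{n}{q}-\frac{1}{m_1}},& |t|\ge1,
  \end{array}\right.
\]
for $q>q_0$, together with $K_t\in L^{q_0,\infty}({\bf R}^n)$ carrying the same time factors. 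Applying Young's inequality (and the weak Young inequality in the borderline case $q=q_0$) to $K_t\ast u_0$ with $u_0\in L^1$ then gives \eqref{eq:44} on all of $\overline{CB}$.

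Then I would fill in the quadrangle: the Marcinkiewicz interpolation theorem applied to the $L^2$--$L^2$ bound at $A$ and the $L^1$--$L^q$ bounds on $\overline{CB}$ gives \eqref{eq:44} throughout the closed triangle $ABC$; self-adjointness of convolution with $K_t$ then transfers these estimates to the triangle $ACD$, the vertex $D=(\tfrac1{q_0'},0)$ being reached through the inclusion $L^{q_0',1}\subset(L^{q_0,\infty})^*$. The Lorentz pairs $(L_*^p,L_*^q)$ at $B$ and $D$ come out exactly as in \eqref{eq:41a} with $q_m$ replaced by $q_0$, which completes the proof.

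I expect the argument itself to be routine: all the genuine analysis has already been invested in Theorem~\ref{th3.3}, and everything above is functional-analytic bookkeeping, formally identical to the proof of Theorem~\ref{th4.1}. The only places requiring care are the two endpoints $B$ and $D$, where one must work in $L^{q_0,\infty}$ and its predual $L^{q_0',1}$ rather than in $L^{q_0}$, combining the weak Young inequality, the Marcinkiewicz theorem in weak-type form, and Lorentz-space duality in the right order; and one must verify that interpolating the three endpoint rates $0$, $\frac{n}{m_1}(\frac1q-1)$ and $\frac{n}{q}-\frac1{m_1}$ reproduces the stated exponents $\frac{n}{m_1}(\frac1q-\frac1p)$ for $0<|t|\le1$ and $n\,|\frac1q-\frac1{p'}|-\frac1{m_1}$ for $|t|\ge1$ uniformly over $\triangle_0$.
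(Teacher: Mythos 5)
Your proposal is exactly the route the paper itself intends: the paper omits the proof of Theorem~\ref{th4.2} with the remark that it is ``very similar to Th.~\ref{th4.1}'', and your argument --- Plancherel at $A$, the kernel bounds of Theorem~\ref{th3.3} combined with the (weak) Young inequality on the edge $\overline{CB}$, Marcinkiewicz interpolation to fill the triangle $ABC$, and self-adjointness of convolution with the real multiplier $\cos(P^{1/2}(\xi)t)$ plus the inclusion $L^{q_0',1}\subset(L^{q_0,\infty})^*$ for the triangle $ACD$ and the endpoint $D$ --- is precisely that argument with $I_2$, $\mu$, $q_m$ replaced by $I_1$, $\mu_0=\frac{n(m-4)}{2(m-2)}$, $q_0$. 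Your computation of $\|K_t\|_{L^q}$ by scaling is right, and for $0<|t|\le1$ the exponents do close: the geometric mean of $|t|^0$ at $A$ and $|t|^{\frac{n}{m_1}(\frac1q-1)}$ at $(1,\frac1q)$ is $|t|^{\theta\frac{n}{m_1}(\frac1q-1)}=|t|^{\frac{n}{m_1}(\frac1{q_\theta}-\frac1{p_\theta})}$, as required.

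The one step you explicitly defer (``one must verify that interpolating the three endpoint rates \dots reproduces the stated exponents'') is, however, where the argument does not close for $|t|\ge1$. Interpolating the rate $|t|^0$ at $A$ with $|t|^{\frac nq-\frac1{m_1}}$ at $(1,\frac1q)$ gives $|t|^{\theta(\frac nq-\frac1{m_1})}$ with $\theta=\frac2p-1$, i.e.\ $|t|^{n|\frac1q-\frac1{p'}|-\frac{\theta}{m_1}}$, which for $\theta<1$ is strictly weaker than the claimed $|t|^{n|\frac1q-\frac1{p'}|-\frac1{m_1}}$. At the vertex $A$ itself no time decay is available at all: the multiplier norm $\sup_\xi|\cos(P^{1/2}(\xi)t)|$ equals $1$ for every $t\neq0$ (the range of $P^{1/2}$ is a half-line), so \eqref{eq:44} at $(p,q)=(2,2)$, $|t|\ge1$, cannot hold as stated. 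To be clear, this defect is inherited verbatim from the published proof of Theorem~\ref{th4.1} (compare \eqref{eq:42}, which gives only $C\|u_1\|_{L^2}$ for $|t|\ge1$, with the claimed $|t|^{-\frac1m}$ at $A$), so your write-up is a faithful reproduction of the paper's argument; but as written it establishes the large-time bound only with the exponent $n|\frac1q-\frac1{p'}|-(\frac2p-1)\frac1{m_1}$ (and its dual) rather than the one displayed in \eqref{eq:44}.
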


Using Th. \ref{th3.3}, the proof of Th. \ref{th4.2} is very similar to
Th. \ref{th4.1}. So we omit the details here.

\begin{rem} \label{rem:43}
Let us briefly compare our results to the literature:
While Theorem 2.3 of \cite{P} only yields $L^p-L^{p'}$ estimates for the case $P=1+|\xi|^m$, our Th. \ref{th4.1} provides more
general $L^p-L^q$ estimates. Moreover, our result applies to more general polynomials $P$.
\end{rem}

%%%%%%%%%%%%%%%%%%%%%%%%%%%%%%%%%%%%%%%%%%%%%%%%%%%%%%%%%%%%%%%%%%%%%%%%%%%%%%%%%%%%%%%%%%%%%%%
\section{Appendix: The type of a hypersurface}\label{S:app}

In \S~VIII.3.2 of \cite{St} the \emph{type of a hypersurface}
$S:=\{z=\Phi(\xi);\:\xi\in {\bf R}^n\}\subset {\bf R}^{n+1}$ is defined as follows:
The type $\tilde m(\xi_0)$ of $S$ at $\xi_0$ is the smallest integer $k\ge2$, such that the matrix (or tensor) $\big(\partial^\alpha\Phi(\xi_0)\big)_{|\alpha|=k}$ does \emph{not} vanish. Then, the type of $S$ is 
$\displaystyle \tilde m:=\max_{\xi_0\in {\bf R}^n} \tilde m(\xi_0)$.

\begin{lem} \label{lem:51}
Let $\deg P(\xi)= m\ge4$ and $P(\xi)>0$ on ${\bf R}^n$. Then, the type of $S:=\{z=P^{1/2}(\xi)\}$ satisfies $\tilde m\le m$.
\end{lem}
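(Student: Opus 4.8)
The plan is to show that every partial derivative $\partial^\alpha P^{1/2}(\xi)$ of order $|\alpha|\le m-1$ cannot all vanish unless at least one derivative of order $\le m$ is nonzero; in fact it suffices to exhibit, at each $\xi_0\in{\bf R}^n$, some multi-index $\alpha$ with $|\alpha|\le m$ and $\partial^\alpha P^{1/2}(\xi_0)\ne0$. Since $P>0$ everywhere, $g:=P^{1/2}$ is a well-defined smooth (indeed real-analytic) positive function on ${\bf R}^n$, so the type $\tilde m(\xi_0)$ is always a finite integer $\ge 2$; the only thing to rule out is $\tilde m(\xi_0)\ge m+1$, i.e.\ that $g$ agrees with a polynomial of degree $\le m-1$ to infinite contact is not the issue — the issue is that ALL derivatives of orders $2,\dots,m$ vanish at $\xi_0$.

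First I would argue by contradiction: suppose $\partial^\alpha g(\xi_0)=0$ for all $\alpha$ with $2\le|\alpha|\le m$ (the first-order and zeroth-order terms are irrelevant to the definition of type, and anyway $g(\xi_0)>0$). Consider the Taylor expansion of $P=g^2$ about $\xi_0$. Writing $g(\xi_0+\eta)=g(\xi_0)+\langle\nabla g(\xi_0),\eta\rangle+R(\eta)$ where, by assumption, $R(\eta)=o(|\eta|^{m})$ as $\eta\to0$ (all Taylor coefficients of $g$ at $\xi_0$ of orders $2$ through $m$ vanish), we get
\begin{equation}\label{app-square}
P(\xi_0+\eta)=g(\xi_0)^2+2g(\xi_0)\langle\nabla g(\xi_0),\eta\rangle+\langle\nabla g(\xi_0),\eta\rangle^2+\big(2g(\xi_0)+2\langle\nabla g(\xi_0),\eta\rangle\big)R(\eta)+R(\eta)^2.
\end{equation}
The key point is that the left side is a polynomial in $\eta$ of degree exactly $m\ge4$ (its degree-$m$ part is $P_m$, the principal part of $P$, which is elliptic and hence not identically zero), while on the right side the terms $g(\xi_0)^2+2g(\xi_0)\langle\nabla g(\xi_0),\eta\rangle+\langle\nabla g(\xi_0),\eta\rangle^2=(g(\xi_0)+\langle\nabla g(\xi_0),\eta\rangle)^2$ form a polynomial of degree $\le2<m$, and the remaining terms are $o(|\eta|^m)$. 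Therefore the degree-$m$ part of $P(\xi_0+\cdot)$, which is $P_m(\eta)$, must vanish identically in $\eta$ — contradicting ellipticity of $P_m$. Hence the assumption was false, so some derivative of order $\le m$ is nonzero at every $\xi_0$, i.e.\ $\tilde m(\xi_0)\le m$ for all $\xi_0$, whence $\tilde m=\max_{\xi_0}\tilde m(\xi_0)\le m$.

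The one subtlety to handle carefully is the claim that $R(\eta)=o(|\eta|^{m})$: this needs $g$ to be $C^m$ at $\xi_0$ (which holds since $g=\sqrt P$ with $P$ smooth and positive) together with the vanishing of all Taylor coefficients of orders $2,\dots,m$, which is exactly the contradiction hypothesis. Then $R$ has vanishing Taylor polynomial up to order $m$, so by Taylor's theorem with Peano remainder $R(\eta)=o(|\eta|^m)$, and consequently the products $\langle\nabla g(\xi_0),\eta\rangle R(\eta)$ and $R(\eta)^2$ are also $o(|\eta|^m)$. I expect this to be the only step requiring care; the rest is bookkeeping of Taylor degrees. (An alternative, cleaner route avoiding Peano-remainder technicalities: compare the formal power series of $g$ and of $g^2$ at $\xi_0$ in the ring of formal power series — if the homogeneous components of $g$ of degrees $2,\dots,m$ all vanish, then the homogeneous component of $g^2$ of degree $m$ equals a sum of products of components of $g$ whose degrees add to $m$, each product involving a factor of degree in $\{2,\dots,m\}$ unless one factor has degree $0$ or $1$ and the other degree $m-1,\dots,m$; one checks every such product vanishes, forcing the degree-$m$ component of $P=g^2$ to be zero, again contradicting ellipticity of $P_m$.) Either way the main obstacle is purely the combinatorial/analytic verification that squaring a function whose intermediate-order jet is trivial cannot produce a genuinely degree-$m$ polynomial.
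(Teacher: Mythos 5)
Your argument is correct and is essentially the paper's own proof: assume a point of type exceeding $m$, Taylor-expand $P^{1/2}$ there, square, and observe that the degree-$m$ homogeneous part of $P$ in $\xi-\xi_0$ would have to vanish, contradicting $\deg P=m$. The only (harmless) difference is that you invoke ellipticity of $P_m$ where $P_m\not\equiv0$ already follows from $\deg P=m$, and you spell out the Peano-remainder bookkeeping that the paper leaves implicit.
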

\begin{proof} 
Assume that there exists a $\xi_0\in{\bf R}^n$ with $\tilde m(\xi_0)>m$. Since $P^{1/2}$ is smooth we have in a small neighborhood around $\xi_0$:
$$
  P^{1/2}(\xi) = P^{1/2}(\xi_0) +(\xi-\xi_0)\cdot \nabla_\xi P^{1/2}(\xi_0)
  + \mathcal O\left(|\xi-\xi_0|^{\tilde m(\xi_0)}\right)\,.
$$
Hence, 
\begin{eqnarray*}
  P(\xi) &=& P(\xi_0) +2(\xi-\xi_0)\cdot \nabla_\xi P^{1/2}(\xi_0)\,\,P^{1/2}(\xi_0)
  + \left[(\xi-\xi_0)\cdot \nabla_\xi P^{1/2}(\xi_0)\right]^2\\
  &&+ \mathcal O\left(|\xi-\xi_0|^{\tilde m(\xi_0)}\right)\,,
\end{eqnarray*}
which contradicts $\deg P(\xi)=m$ with $m\ge4$.
\end{proof}

\begin{rem} \label{rem:52}
\begin{enumerate}
\item If we assume $m=2$ in Lemma \ref{lem:51}, we obtain $\tilde m=2$.
\item In the example $P(\xi)=1+2|\xi|^2+|\xi|^4$ we have $P^{1/2}(\xi)=1+|\xi|^2$,
and hence $\tilde m=2<m$. But in general we can only conclude $\tilde m\le m$ for $m\ge4$.
\end{enumerate}
\end{rem}

\begin{lem} \label{lem:53}
Let the polynomial $P$ on ${\bf R}^n$ satisfy \textbf{(H1)} and \textbf{(H2)}. Then 
$$
{\rm det}\Big{(}\frac{\partial^{2}P^{1/2}(\xi)}{\partial\xi_{i}\partial\xi_{j}}\Big{)}_{n\times
n}\sim c(\frac{\xi}{|\xi|})\,|\xi|^{n(\frac{m}{2}-2)}\qquad \mbox{for $|\xi|$ large,}
$$
where $c$ is a smooth function on the unit sphere of ${\bf R}^n$, bounded away from $0$.
\end{lem}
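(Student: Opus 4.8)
The plan is to reduce the computation of the Hessian determinant of $P^{1/2}$ to that of $P$ and its gradient, and then invoke the ellipticity/non-degeneracy hypotheses. First I would compute, by the chain rule, the first and second derivatives of $P^{1/2}$. Writing $Q=P^{1/2}$ we have $\partial_i Q = \tfrac12 P^{-1/2}\partial_i P$ and
$$
\partial_i\partial_j Q = \frac{1}{2}P^{-1/2}\partial_i\partial_j P - \frac{1}{4}P^{-3/2}\,\partial_i P\,\partial_j P
= \frac{1}{4}P^{-3/2}\Big(2P\,\partial_i\partial_j P - \partial_i P\,\partial_j P\Big).
$$
Thus $\big(\partial_i\partial_j Q\big)_{n\times n} = \tfrac14 P^{-3/2}\,M(\xi)$, where $M(\xi) := 2P(\xi)\,\mathrm{Hess}\,P(\xi) - \nabla P(\xi)\otimes\nabla P(\xi)$, and therefore $\det(\partial_i\partial_j Q) = 4^{-n} P^{-3n/2}\det M(\xi)$.

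Next I would analyze the leading behavior of $\det M(\xi)$ for large $|\xi|$. Since $P$ is elliptic of order $m$ with principal part $P_m$, we have $P(\xi)\sim P_m(\xi)$, $\nabla P(\xi)\sim \nabla P_m(\xi)$, and $\mathrm{Hess}\,P(\xi)\sim \mathrm{Hess}\,P_m(\xi)$ as $|\xi|\to\infty$, with all lower-order corrections uniformly controlled on the sphere. Hence $M(\xi)$ is, to leading order, $M_m(\xi):=2P_m(\xi)\,\mathrm{Hess}\,P_m(\xi) - \nabla P_m(\xi)\otimes\nabla P_m(\xi)$, which is homogeneous of degree $2m-2$ in $\xi$; consequently $\det M_m$ is homogeneous of degree $n(2m-2)$, i.e.\ $\det M_m(\xi) = |\xi|^{n(2m-2)} c_0(\xi/|\xi|)$ for a smooth function $c_0$ on ${\bf S}^{n-1}$. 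Combining with $P^{-3n/2}\sim |\xi|^{-3mn/2}$, and noting $n(2m-2) - \tfrac{3mn}{2} = n\big(\tfrac{m}{2}-2\big)$, one obtains $\det(\partial_i\partial_j Q)(\xi)\sim c(\xi/|\xi|)\,|\xi|^{n(m/2-2)}$ with $c = 4^{-n} (P_m/|\xi|^m)^{-3n/2} c_0$ smooth on the sphere.

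The remaining — and main — point is to show that $c$ is bounded away from $0$, equivalently that $\det M_m(\omega)\ne0$ for all $\omega\in{\bf S}^{n-1}$. This is exactly where hypothesis \textbf{(H2)}, or rather its reformulation \textbf{(H2$'$)}, enters. I would connect $M_m$ to the bordered Hessian / to the Hessian of $P_m^{1/m}$: since $Q_m:=P_m^{1/m}$ is homogeneous of degree $1$, the function $\psi(\omega)=\langle z,\omega\rangle P_m(\omega)^{-1/m}$ from \textbf{(H2$'$)} is, up to the restriction to the sphere, a linear functional divided by $Q_m$, and the non-degeneracy of $\psi$ at its critical points translates into the statement that the tangential Hessian of $Q_m$ restricted to the sphere is non-degenerate — which, together with the radial homogeneity (Euler's relation gives $\xi\cdot\nabla Q_m = Q_m$, so the radial direction is a kernel direction of $\mathrm{Hess}\,Q_m$), forces $\mathrm{det}$ of the relevant $(n-1)\times(n-1)$ block to be nonzero. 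A direct computation, entirely parallel to the one above, shows $\mathrm{Hess}\,Q_m = \tfrac1m P_m^{1/m-2}\big(P_m\,\mathrm{Hess}\,P_m - \tfrac{m-1}{m}\nabla P_m\otimes\nabla P_m\big)$, whose matrix factor differs from $M_m$ only by scalar factors and a harmless rank-one adjustment; since \textbf{(H2)} is stated precisely as the non-vanishing of $\det\mathrm{Hess}\,P_m$ and is equivalent to \textbf{(H2$'$)} by Lemma 2 of \cite{C2}, and also (as noted after \textbf{(H2$'$)}) to $\det(\partial_i\partial_j P)$ being an elliptic polynomial of order $n(m-2)$, I would argue that $\det M_m(\omega)$ is, up to a nonzero power of $P_m(\omega)$, a nonzero multiple of $\det\mathrm{Hess}\,P_m(\omega)$ plus rank-one corrections that do not affect non-vanishing on the ellipticity set. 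By compactness of ${\bf S}^{n-1}$ and continuity, $|c|$ then attains a positive minimum, which finishes the proof. The delicate bookkeeping — verifying that the rank-one term $\nabla P_m\otimes\nabla P_m$ does not introduce spurious zeros in $\det M_m$ — is the part I expect to require the most care, and it is handled cleanly by passing to the equivalent condition \textbf{(H2$'$)} on $\psi$ rather than working with $M_m$ directly.
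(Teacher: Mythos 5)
Your overall strategy is sound and genuinely different from the paper's. You reduce everything, via the chain rule, to the non-vanishing on ${\bf S}^{n-1}$ of $\det M_m$ with $M_m:=2P_m\,{\rm Hess}\,P_m-\nabla P_m\otimes\nabla P_m$, and your homogeneity count $n(2m-2)-\tfrac{3mn}{2}=n(\tfrac m2-2)$ is correct, as is the perturbation argument reducing $P$ to its principal part $P_m$ (once $\det M_m$ is known to be bounded away from zero on the sphere). The paper instead writes $P^{1/2}=P_m^{1/2}+a$ with $a\in S^{m/2-1}$ and obtains ${\rm det}\,{\rm Hess}\,P_m^{1/2}\neq0$ on ${\bf R}^n\setminus\{0\}$ by citing Proposition 4.2 of \cite{CMY}: \textbf{(H2)} makes the level set $\Sigma=\{P_m^{1/m}=1\}$ strictly convex of type $2$, and the same proposition applied with $\lambda=m/2$ gives the non-degeneracy of the Hessian of $(P_m^{1/m})^{m/2}$. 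So the paper outsources exactly the point you leave open.

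And that point is a genuine gap as written. You call the rank-one term $\nabla P_m\otimes\nabla P_m$ a ``harmless adjustment'' and say the bookkeeping ``is handled cleanly by passing to \textbf{(H2$'$)}'', but you neither carry out that translation nor give any reason why the rank-one perturbation cannot kill the determinant; in general it can, since $\det(A-vv^{T})=\det A\,(1-v^{T}A^{-1}v)$ vanishes whenever $v^{T}A^{-1}v=1$, so non-vanishing of $\det{\rm Hess}\,P_m$ alone does not imply $\det M_m\neq0$. The gap is fillable, and in fact by a short computation rather than by \textbf{(H2$'$)}: Euler's identity for the $(m-1)$-homogeneous functions $\partial_jP_m$ gives $({\rm Hess}\,P_m)\,\xi=(m-1)\nabla P_m$, hence $\nabla P_m^{T}({\rm Hess}\,P_m)^{-1}\nabla P_m=\tfrac{1}{m-1}\langle\nabla P_m,\xi\rangle=\tfrac{m}{m-1}P_m$, and the matrix determinant lemma yields
$$
\det M_m=(2P_m)^{n}\,\det({\rm Hess}\,P_m)\,\Bigl(1-\tfrac{m}{2(m-1)}\Bigr)
=\tfrac{m-2}{2(m-1)}\,(2P_m)^{n}\,\det({\rm Hess}\,P_m),
$$
which is nonzero by \textbf{(H1)}, \textbf{(H2)} precisely because $m\neq2$. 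With this identity inserted your proof closes, is more elementary than the paper's appeal to \cite{CMY}, and even makes transparent why the statement fails for $m=2$; but without it (or an actual execution of the \textbf{(H2$'$)} route), the central claim of the lemma is asserted rather than proved.
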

\begin{proof} 
\noindent
\underline{Step 1:}\\
With $P_m$ denoting the principal part of $P$, we define
$\phi(\xi):=P_m^{1/m}(\xi)$, which is positive for $\xi\ne0$ and homogeneous of degree one.
Now we consider its level-1-hypersurface
$$
  \Sigma:=\{\xi\in{\bf R}^n\,;\, \phi(\xi)=1\} \subset {\bf R}^n\,.
$$
Since $P_m=\phi^m$ is non-degenerate by assumption \textbf{(H2)} (i.e.\ 
${\rm det}\left(\partial_{i}\partial_{j} \phi^m\right) \ne0$ for $\xi\ne0$), Proposition 4.2 from \cite{CMY} implies that $\Sigma$ is strictly convex and of type 2.
% has nonzero Gaussian curvature everywhere.
Applying again Proposition 4.2 (with $\lambda=m/2$) implies
\begin{equation}\label{eq:51}
{\rm det}\Big{(}\frac{\partial^{2}P_m^{1/2}(\xi)}{\partial\xi_{i}\partial\xi_{j}}\Big{)}_{n\times
n}\neq0 \quad \forall \ \xi\in {\bf R}^n\backslash\{0\}\,.
\end{equation}

\noindent
\underline{Step 2:}\\
Now we decompose
$$
  P^{1/2}(\xi) = P^{1/2}_m(\xi) + a(\xi)\,,
$$
with $a=\frac{P-P_m}{\sqrt P + \sqrt{P_m}}\in S^{\frac{m}{2}-1}$.
Hence, 
$$
  {\rm det}\Big{(}\frac{\partial^{2}P^{1/2}(\xi)}{\partial\xi_{i}\partial\xi_{j}}\Big{)}
  = {\rm det}\Big{(}\frac{\partial^{2}P^{1/2}_m(\xi)}{\partial\xi_{i}\partial\xi_{j}}\Big{)}+Q(\xi)\,,
$$
where the first term on the r.h.s.\ is $\mathcal O\left(|\xi|^{n(\frac{m}{2}-2)}\right)$ for $\xi$ large,
and the second term is of the order $\mathcal O\left(|\xi|^{n(\frac{m}{2}-2)-1}\right)$.
The claim then follows from \eqref{eq:51}.
\end{proof}

%%%%%%%%%%%%%%%%%%%%%%%%%%%%%%%%%%%%%%%%%%%%%%%%%%%%%%%%%%%%%%%%%%%%%%%%%%%%%%%%%%%%%%%%%%%%%%%

% ------------------------------------------------------------------------

% ------------------------------------------------------------------------
\end{document}